\numberwithin{equation}{section}
\newtheorem{theorem}{Theorem}[section]
\newtheorem{proposition}[theorem]{Proposition}
\newtheorem{corollary}[theorem]{Corollary}
\newtheorem{lemma}[theorem]{Lemma}
\newtheorem{remark}[theorem]{Remark}
\newtheorem{example}[theorem]{Example}
\newtheorem{prop}[theorem]{Proposition}
\newtheorem{defn}[theorem]{Definition}
\newcommand{\diff}{{\rm d}}
\newcommand{\del}{\partial}
\newcommand{\dist}{\mathop{\mathrm{dist}}\nolimits}
\newcommand{\ddc}{dd^c}
\newcommand{\id}{{\rm id}}
\newcommand{\DD}{\mathbb{D}}
\renewcommand\P{\mathbb{P}}
\newcommand{\lp}{\langle}
\newcommand{\rp}{\rangle}
\title{Dynamics of holomorphic correspondences on Riemann Surfaces}
\author{Tien-Cuong Dinh, Lucas Kaufmann and Hao Wu}
\address{Department of Mathematics,  National University of Singapore - 10, Lower Kent Ridge Road - Singapore 119076}
\email{matdtc@nus.edu.sg; lucaskaufmann@nus.edu.sg; e0011551@u.nus.edu}
\date{}
\begin{document}

\begin{abstract}
We study the dynamics of holomorphic correspondences $f$ on a compact Riemann surface $X$ in the case, so far not well understood, where $f$ and $f^{-1}$ have the same topological degree. Under a mild and necessary condition that we call non weak modularity,  $f$ admits two canonical probability measures $\mu^+$ and $\mu^-$ which are invariant by $f^*$ and $f_*$ respectively. If the critical values of $f$ (resp. $f^{-1}$) are not periodic, the backward 
(resp. forward) orbit of any point $a \in X$ equidistributes towards $\mu^+$ (resp. $\mu^-$), uniformly in $a$ and exponentially fast.
\end{abstract}

\maketitle

%

\section{Introduction and Main results}

    Let $X$ be a compact Riemann surface. Denote by $\pi_1$ and $\pi_2$ the canonical projections from $X\times X$ to its factors. A \textit{holomorphic correspondence} on $X$ is an effective analytic cycle $\Gamma = \sum_i \Gamma_i$ in $X\times X$ of pure dimension one containing no fiber of $\pi_1$ or $\pi_2$. Here, $\Gamma_i$ are irreducible but not necessarily distinct. A correspondence induces an action on subsets $A$ of $X$, which we denote by $f$, given by the following rule $$f(A):=\pi_2(\pi_1^{-1}(A)\cap\Gamma) \quad \text{and} \quad f^{-1}(A):=\pi_1(\pi_2^{-1}(A)\cap\Gamma),$$ where we can count points with multiplicity. We call $\Gamma$ {\it the graph} of $f$.
    
The correspondence $f$ can be viewed as a multi-valued map from $X$ to itself, where the value of $f$ at $x \in X$  is the finite set $f(x)$. The adjoint correspondence $f^{-1}$ of $f$ is the correspondence whose graph is the image of $\Gamma$ by the involution $(x,y) \mapsto (y,x)$. In general, we don't have $f\circ f^{-1}=\id$ as in the case of maps.

    \vskip5pt
Denote by $d_1(f)$ and $d_2(f)$ the degree of $\pi_1|_\Gamma$ and $\pi_2|_\Gamma$ respectively. The number $d_2(f)$ is called the \textit{topological degree} of $f$. Notice that $d_1(f) = d_2(f^{-1})$ and $d_2(f) = d_1(f^{-1})$. If $g$ is another correspondence we can define the composition $f \circ g$ (see Section \ref{sec:preliminaries}) and we have $d_j(f \circ g) = d_j(f) \cdot d_j(g)$, $j=1,2$. In particular we can define the $n^{th}$ iterate $f^n$ of $f$ and study its dynamics.
    
    When $d_2(f) > d_1(f)$, the global dynamics of $f$ is more or less  well understood and the situation is similar to the case of rational functions of degree larger than $1$, see \cite{dinh:correspondance-polynomiale,dinh-sibony:distribution} and also the surveys \cite{fornaess:survey,sibony:panorama,dinh-sibony:cime}. In that case, there is a canonical probability measure $\mu$ on $X$ given by 
    $$\mu = \lim_{n \to \infty} \frac{1}{d_2(f)^n} (f^n)^* \omega,$$
where $\omega$ is a fixed volume form of integral one (see Section \ref{sec:preliminaries} for the definition of the pullback operator). The measure $\mu$ is invariant in the sense that $f^* \mu = d_2(f) \cdot \mu$ and, among many other good properties, it describes the distribution of repelling periodic points and that of pre-images of a generic point. We refer to \cite{dinh:correspondance-polynomiale} for more details. The support of $\mu$ is disjoint from the normality set of $f$ (an analogue of the Fatou set of a rational function), see \cite{bharali-sridharan}. When $d_2(f) < d_1(f)$ we can apply the above results for the adjoint correspondence $f^{-1}$.
    
   The remaining case when $d_1(f) = d_2(f)$, that is, when $f$ and $f^{-1}$ have the same topological degree, is more delicate and the known methods to construct $\mu$ and study its properties do not apply directly. In the special situation of modular correspondences, $X$ is already equipped with a natural measure which is invariant by the dynamics (see  Mok \cite{mok:correspondences}, Clozel-Ullmo \cite{clozel-ullmo} and Example \ref{ex:modular-correspondence} below) and some equidistribution theorems can be obtained, see Clozel-Otal \cite{clozel-otal} and \cite{dinh:modular}.
   
   Our first main result gives the existence of two canonical measures for a correspondence $f$ with $d_1(f) = d_2(f)$ satisfying a condition of non-modularity. A correspondence $f$ with graph $\Gamma$ is called \textbf{weakly modular} if there exists a  positive measure $m$ on $\Gamma$ and probability measures $m_1$ and $m_2$ on $X$ such that 
$ m=(\pi_1|_\Gamma)^*(m_1)$ and  $m=(\pi_2|_\Gamma)^*(m_2)$. 

\begin{theorem}\label{thm:main-theorem}
		Let $f$ be a non-weakly modular correspondence on a compact Riemann surface $X$ such that $d_1(f) = d_2(f) = d \geq 2$. There exist two probability measures $\mu^+, \mu^-$ on $X$ which are invariant by $f^*, f_*$  in the sense that  
		$$f^* \mu^+ = d \, \mu^+ \qquad \text{and} \qquad f_* \mu^- = d \, \mu^-$$ 
		and such that if $\alpha$ is any smooth $(1,1)$-form on $X$ and $c_\alpha:=\int_X\alpha$,
		then
		\begin{equation} \label{eq:def-mu-}
		\frac{1}{d^n}(f^n)^*\alpha \to c_\alpha\mu^+ \qquad \text{and} \qquad 
		\frac1{d^n}(f^n)_*\alpha \to c_\alpha \mu^-
		\qquad \text{as} \qquad n \to \infty.
		\end{equation}
\end{theorem}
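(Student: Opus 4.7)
My plan is to construct $\mu^+$ as a weak cluster value of the currents $T_n := \frac{1}{d^n}(f^n)^*\omega$ for a fixed Kähler form $\omega$ with $\int_X \omega = 1$, and then upgrade the convergence from $\omega$ to arbitrary smooth $(1,1)$-forms via a $dd^c$-decomposition. Since $H^2(X,\R)\cong\R$ on a compact Riemann surface and every $(1,1)$-form on $X$ is automatically of top degree, any smooth $(1,1)$-form $\alpha$ admits a representation $\alpha = c_\alpha \omega + dd^c u$ with $u$ smooth (by the $\partial\bar\partial$-lemma). Because the cycle $[\Gamma]$ is closed, $f^*$ commutes with $dd^c$, which gives
\begin{equation*}
\frac{1}{d^n}(f^n)^*\alpha \;=\; c_\alpha\,T_n \;+\; dd^c(L^n u), \qquad L:=\tfrac{1}{d}f^*.
\end{equation*}
The theorem then reduces to two claims: (i) $T_n \to \mu^+$ weakly for some probability measure $\mu^+$ with $f^*\mu^+ = d\mu^+$, and (ii) the uniformly bounded sequence $L^n u$ converges in $L^1$ to a constant, so that $dd^c(L^n u)\to 0$ as currents.

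Existence of a subsequential limit in (i) is routine. Writing $T_n = \omega + dd^c \Phi_n$ with $\Phi_n$ $\omega$-psh and normalized by $\sup_X \Phi_n = 0$, the family $(\Phi_n)$ is compact in $L^1(X)$, so every subsequence of $T_n$ has a further subsequence converging weakly to a probability measure, automatically $f^*$-invariant with eigenvalue $d$ by continuity of $f^*$ on positive currents. What is \emph{not} routine --- and is the heart of the proof --- is uniqueness of this cluster value together with claim (ii). In the regime $d_1(f)<d_2(f)$ treated by Dinh--Sibony, convergence is driven by a geometric contraction of order $(d_1/d_2)^k$ that forces $L^1$-summability of $\sum_k (f^k)^*u/d_2^k$; this contraction disappears entirely when $d_1(f)=d_2(f)$, and must be replaced by a softer, operator-theoretic argument powered by the non-weak modularity hypothesis.

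The operator $L$ is positive and unital on $C(X)$, with operator norm one and formal dual $L^* = \frac{1}{d}f_*$ on measures. My plan is to prove that non-weak modularity forces $L$ to be quasi-compact with $1$ as a simple eigenvalue and no other eigenvalue on the unit circle. Suppose, toward a contradiction, that one had two distinct $L^*$-invariant probability measures --- or equivalently a non-constant continuous function fixed by $L$. One should be able to package this extra invariant data into probability measures $m_1, m_2$ on $X$ together with a measure $m$ on $\Gamma$ satisfying $m = (\pi_1|_\Gamma)^* m_1 = (\pi_2|_\Gamma)^* m_2$, exhibiting the weakly modular structure ruled out by hypothesis. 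The hard part of the proof lies precisely in carrying out this repackaging: translating the mere absence of weak modularity into quantitative spectral information on $L$ sharp enough to force $L^n u \to \int u\,d\mu^-$ in $L^1$. Once this is done, both (i) and (ii) follow at once, and the symmetric argument applied to the adjoint correspondence $f^{-1}$ produces $\mu^-$ with $f_*\mu^- = d\mu^-$, completing the proof.
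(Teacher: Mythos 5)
There is a genuine gap, and it sits exactly where you locate ``the heart of the proof.'' Your outer reduction (writing $\alpha=c_\alpha\omega+dd^c u$ and commuting $f^*$ with $dd^c$) is fine, but the mechanism you propose for the contraction --- quasi-compactness of $L=\frac1d f^*$ on $C(X)$ with simple peripheral spectrum, to be deduced from non-weak modularity via invariant measures --- does not work as sketched. First, the chain of equivalences is broken: failure of quasi-compactness is far weaker than the existence of two distinct $L^*$-invariant probability measures (the essential spectral radius can equal $1$ even for uniquely ergodic systems), and ``two invariant measures'' is not equivalent to ``a non-constant continuous fixed function'' (only one implication holds). Second, invariant measures are the wrong kind of data to produce weak modularity: an invariant measure $\mu$ with $f^*\mu=d\mu$ only says $(\pi_1|_\Gamma)_*(\pi_2|_\Gamma)^*\mu=d\mu$, and gives no reason why the single measure $m$ on $\Gamma$ should be a pullback under \emph{both} projections, which is what Definition \ref{def:non-weakly-modular} demands. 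Third, quasi-compactness on $C(X)$ would give uniform exponential convergence of $L^n u$ for every continuous $u$, i.e.\ essentially Theorem \ref{thm:equid-no-periodic-crit}; the paper only obtains that under the extra hypothesis that no critical value is periodic, and via a substantially harder argument (Lojasiewicz plus Moser--Trudinger). This is strong evidence that non-weak modularity alone cannot yield the spectral picture you are aiming for.

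The paper's actual route is different in kind: the spectral gap lives not on $C(X)$ but on $L^2_{(1,0)}$-forms. Proposition \ref{prop:CS-ineq} shows $\|f^*\phi\|_{L^2}\le d\|\phi\|_{L^2}$ with equality forcing all local branches to satisfy $\tau_j^*\phi=\tau_l^*\phi$; Proposition \ref{normless1} then shows that if $\|\frac1d f^*\|=1$ on $L^2_{(1,0)}$, a limit of the measures $i\phi_n\wedge\overline{\phi_n}$ for a norm-saturating sequence $\phi_n$ produces exactly the double-pullback measure $m$ on $\Gamma$ --- the quadratic expression in $\phi$ is what creates the weakly modular structure, not any linear invariant datum. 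With $\|\frac1d f_*\|=\lambda<1$ on $(1,0)$-forms, one runs the iteration in the Sobolev space $W^{1,2}$: writing $h_n$ for the normalized iterates, $\partial h_n=d^{-n}(f^n)_*\partial h$ contracts geometrically, Poincar\'e--Sobolev controls $\|h_n\|_{L^2}$, and $\frac1{d^n}(f^n)_*h\to c_h^+$ exponentially fast; the convergence \eqref{eq:def-mu-} then follows by duality against smooth test functions, which also sidesteps the issue (present in your approach to uniqueness of the cluster value of $T_n$) that potentials of $\frac1d f^*\omega-\omega$ are not continuous and not even in $W^{1,2}$. To repair your proof you would need to replace the $C(X)$ spectral argument by this $L^2$ contraction on forms, or supply an entirely new argument establishing your claimed spectral properties of $L$ on $C(X)$ from non-weak modularity --- which I do not believe is possible.
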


We will see in the proof that the above convergences are exponentially fast. This is a strong mixing property of the system, see also Proposition \ref{prop:mixing} below. Note also that being weakly-modular is a very restrictive property and the above theorem applies to a wide class of correspondences. 
Moreover, the theorem is no longer true if we remove the non-weak modularity condition. A surprising feature of the above result is that, in contrast to the case where $d_1(f) \neq d_2(f)$, we have two canonical measures which share the role of the global description of the system. In general they are different but in some cases, e.g.\ when $f=f^{-1}$, they can be equal.
  
The known methods used to produce a canonical invariant measure in the case $d_1(f) \neq d_2(f)$ rely in some way or another on the spectral gap of the action of $\frac{1}{d}f_*$ or $\frac{1}{d}f^*$ in cohomology. In our case, such a spectral gap doesn't exist. Surprisingly enough, in the absence of a modular measure as above, we can prove that the actions of  $\frac{1}{d}f_*$ and $\frac{1}{d}f^*$  on the Sobolev space $W^{1,2}$ of $X$ have good spectral properties, which allow us to adapt some of those methods to our setting.

Once we have canonical measures it is natural to ask if they describe the distribution of images and pre-images of a given point by $f^n$. The following result gives an answer to this question, showing that under a  condition on $f$, the pre-images (resp.\ images) of any point equidistribute exponentially fast towards $\mu^+$ (resp.\ $\mu^-$). 

\begin{theorem} \label{thm:equid-no-periodic-crit}
Let $f, X, d, \mu^+$ and $\mu^-$ be as in Theorem \ref{thm:main-theorem}. Suppose moreover that no critical value of $f$ is periodic. Then there is a constant $0<\lambda_+ < 1$ such that for any $a \in X$ and every test function $\varphi$ of class $\mathcal C^\beta$ on $X$, with $0<\beta \leq 1$,  we have
\begin{equation} \label{eq:equid-no-periodic-crit}
\big| \big \langle d^{-n}(f^n)^*\delta_a - \mu^+, \varphi \big \rangle \big | \leq A^+_\beta \|\varphi\|_{\mathcal C^\beta} \lambda_+^{\beta n} \quad \text{for every} \quad n \geq 0,
\end{equation}
where $A^+_\beta > 0$ is a constant independent of $n$, $a$ and $\varphi$.
Analogously, if  no critical value of $f^{-1}$ is periodic, then there is a constant $0<\lambda_- <1$ such that for any $a \in X$ and every test function $\varphi$ of class $\mathcal C^\beta$ on $X$, with $0<\beta \leq 1$, we have
\begin{equation} \label{eq:equid-no-periodic-crit2}
\big| \big \langle d^{-n}(f^n)_*\delta_a - \mu^-, \varphi \big \rangle \big | \leq A^-_\beta \|\varphi\|_{\mathcal C^\beta} \lambda_-^{\beta n} \quad \text{for every} \quad n \geq 0,
\end{equation}
for some constant $A^-_\beta > 0$  independent of $n$, $a$ and $\varphi$.
\end{theorem}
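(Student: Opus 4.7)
The plan is to first prove the bound (\ref{eq:equid-no-periodic-crit}) for smooth test functions $\varphi$ with rate $C\lambda^n\|\varphi\|_{\mathcal{C}^2}$, and then extend to $\varphi\in\mathcal{C}^\beta$ by the standard regularization--interpolation argument: convolving $\varphi$ with a smooth bump of scale $\epsilon$ produces $\varphi_\epsilon\in\mathcal{C}^\infty$ with $\|\varphi-\varphi_\epsilon\|_{\mathcal{C}^0}\lesssim\epsilon^\beta\|\varphi\|_{\mathcal{C}^\beta}$ and $\|\varphi_\epsilon\|_{\mathcal{C}^2}\lesssim\epsilon^{\beta-2}\|\varphi\|_{\mathcal{C}^\beta}$. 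Writing $\varphi=\varphi_\epsilon+(\varphi-\varphi_\epsilon)$ and optimizing $\epsilon$ as a fractional power of $\lambda^n$ gives the H\"older rate $\lambda_+^{\beta n}$ after renaming constants. The estimate (\ref{eq:equid-no-periodic-crit2}) for $\mu^-$ follows from the $\mu^+$-case applied to the adjoint correspondence $f^{-1}$, whose critical values are those of $f^{-1}$ by definition.

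For smooth $\varphi$, I would apply the $\ddc$-method. Let $g_a$ be a Green function on $(X,\omega)$ satisfying $\ddc g_a=\delta_a-\omega$ and normalized to depend continuously on $a$ with logarithmic pole $\log|x-a|$ at $a$; such a family $\{g_a\}_{a\in X}$ can be chosen uniformly bounded in $L^p$ for every $p<\infty$. Setting $G_n:=d^{-n}(f^n)^*g_a$ and $c_n:=\int_X G_n\,\omega$, integration by parts gives
$$
\bigl\langle\tfrac{1}{d^n}(f^n)^*\delta_a-\mu^+,\,\varphi\bigr\rangle
=\bigl\langle\tfrac{1}{d^n}(f^n)^*\omega-\mu^+,\,\varphi\bigr\rangle+\langle G_n-c_n,\,\ddc\varphi\rangle.
$$
The first bracket is $O(\lambda^n\|\varphi\|_{\mathcal{C}^2})$ by the exponential speed of convergence promised in Theorem~\ref{thm:main-theorem}, so the task reduces to proving
$$
\|G_n-c_n\|_{L^1(X)}\leq C\lambda^n\qquad\text{uniformly in }a\in X.
$$

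To establish this $L^1$-decay I would combine the $W^{1,2}$-spectral gap of $\Lambda:=d^{-1}f^*$ on mean-zero functions---produced in the proof of Theorem~\ref{thm:main-theorem}---with a truncation of the logarithmic singularity. Decompose $g_a=g_a^{\leq M}+r_a^M$, where $g_a^{\leq M}:=\max(g_a,-M)$ is Lipschitz with $\|g_a^{\leq M}\|_{W^{1,2}}\lesssim\sqrt M$, while $r_a^M$ is supported in the disk $\{|x-a|\leq e^{-M}\}$ with $\|r_a^M\|_{L^1}\lesssim Me^{-2M}$. The spectral gap yields $\|\Lambda^n g_a^{\leq M}-(\text{mean})\|_{W^{1,2}}\lesssim\theta^n\sqrt M$ for some $\theta<1$, and a fortiori the same bound in $L^1$. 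For the singular remainder, duality gives
$$
\|\Lambda^n r_a^M\|_{L^1}\leq\int_X|r_a^M|\,d\bigl(d^{-n}(f^n)_*\omega\bigr),
$$
so the problem reduces to estimating the mass that $d^{-n}(f^n)_*\omega$ places on exponentially small balls around $a$. Choosing $M\asymp n$ then balances both error terms and yields the required decay.

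The main obstacle is precisely this final estimate on $d^{-n}(f^n)_*\omega(B(a,e^{-M}))$. This is where the hypothesis that no critical value of $f$ is periodic enters decisively: without it, a periodic critical cycle passing through (or accumulating at) $a$ could compound ramification multiplicities geometrically along the orbit, inflating the local density of $(f^n)_*\omega$ near $a$ and defeating the decay. Under the non-periodicity assumption, the local multiplicities of $(f^n)_*\omega$ near $a$ grow at most polynomially in $n$, so that together with the convergence $d^{-n}(f^n)_*\omega\to\mu^-$ already supplied by Theorem~\ref{thm:main-theorem} and the diffuseness of $\mu^-$ on neighbourhoods of the generic point $a$, one obtains the bound on small balls needed to close the argument.
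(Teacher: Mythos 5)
Your plan is a genuinely different route from the paper's, and it is worth comparing the two. You use the $\ddc$-method: subtract the Green function $g_a$ with $\ddc g_a=\delta_a-\omega$, reduce to an $L^1$ decay estimate for the potentials $G_n-c_n$, truncate the logarithmic singularity of $g_a$ at level $-M$, handle the truncated part via the $W^{1,2}$-spectral gap of Proposition~\ref{expcon}, and handle the singular remainder $r_a^M$ by duality against $d^{-n}(f^n)_*\omega$. The paper instead works directly with $\varphi_n=\Lambda^n\varphi$ where $\Lambda=d^{-1}f_*$, and combines two ingredients you do not use: (i)~a quantitative modulus of continuity for $\varphi_n$, proved via a \L ojasiewicz-type inequality for inverse branches (Lemma~\ref{l:lojasiewicz} and Proposition~\ref{prop:holder-loja}), where the non-periodicity hypothesis enters through the uniform bound $\delta$ on local degrees (Lemmas~\ref{lemma:finite-visits}--\ref{l:small-mult}); and (ii)~Moser--Trudinger exponential integrability of the bounded family $\mathcal W^+\subset W^{1,2}$ (Propositions~\ref{prop:exponential-estimate} and~\ref{prop:holder-exponential}). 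Point (ii) upgrades $W^{1,2}$-smallness to sup-norm smallness given a H\"older bound, entirely bypassing any estimate on the mass that the iterated pushforward of $\omega$ places near the point $a$. Your interpolation step (regularizing to $\mathcal C^2$ and optimizing the mollification scale) and the reduction of $\mu^-$ to $\mu^+$ via $f^{-1}$ are standard and fine, and differ only cosmetically from the paper's interpolation between $\mathcal C^0$ and $\mathcal C^1$.

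However, there is a genuine gap at the step you yourself flag as ``the main obstacle.'' Bounding $\|\Lambda^n r_a^M\|_{L^1}$ by duality reduces to estimating $\int |r_a^M|\,d\bigl(d^{-n}(f^n)_*\omega\bigr)$, hence essentially the mass $d^{-n}(f^n)_*\omega\bigl(B(a,e^{-M})\bigr)$ with $M\asymp n$, \emph{uniformly in $a$}. You justify this by invoking the polynomial growth of local multiplicities together with the weak convergence $d^{-n}(f^n)_*\omega\to\mu^-$ and the diffuseness of $\mu^-$. Neither of these yields the estimate you need: weak convergence controls the mass of \emph{fixed} open sets, not of balls whose radius shrinks geometrically as $n\to\infty$, and the convergence is in no way uniform on such shrinking families. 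The bound $\delta$ on local degrees controls the order of the density singularity of $(f^n)_*\omega$ at each critical value of $f^n$ (it behaves like $|w|^{2/\delta-2}$), but it does not control the \emph{normalized} mass $d^{-n}\sum_i|J_i|^{-1}$ concentrated near an arbitrary $a$, since the number of inverse branches landing near $a$ and the sizes of their Jacobians are exactly the quantities one is trying to equidistribute. Proving a uniform-in-$a$, exponentially small, small-ball estimate for $d^{-n}(f^n)_*\omega$ is at least as hard as the equidistribution statement itself; as written, your argument is circular at precisely this point. This is why the paper does not attempt such a measure estimate and instead trades it for the soft H\"older-plus-Moser--Trudinger mechanism: Proposition~\ref{prop:holder-exponential} shows that a function in the fixed bounded family $\mathcal W^+$ which is $(M,\gamma)$-H\"older cannot exceed $A\gamma^{-1}(1+\log M)$ in sup norm, because a large value would propagate to a disc of radius $M^{-1/\gamma}$ and violate the uniform bound $\int_X e^{\theta\varphi^2}\omega\leq A_{\mathcal F}$. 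To salvage your route you would need to either prove the required small-ball estimate from scratch or replace the truncation-and-duality step by an argument that, like the paper's, never needs to localize the pushforward measure near $a$.
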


\vskip5pt
The condition on the critical values of $f$ or $f^{-1}$ in the above theorem is needed to get the convergence rate. It is natural to ask in which generality such a result holds. In the general case, it is interesting to understand the exceptional set of $f$, i.e., the maximal finite set $E \subset X$ such that $f^{-1}(E) \subset E$.  We expect convergence, without speed, outside the orbit of the exceptional set (which may be dense in $X$), but the situation seems to be more delicate then the one of maps or correspondences with  $d_1(f) \neq d_2(f)$. The main problem is that the size of the orbit of the critical values increases faster than the one with $d_1(f) \neq d_2(f)$.  For this reason, the geometrical methods of Freire-Lopes-Ma{\~n}{\'e} \cite{freire-lopes-mane} and Lyubich \cite{lyubich} do not apply. For Theorem \ref{thm:equid-no-periodic-crit}, we will use an analytic method which is related to the one introduced by Sibony and the first author in \cite{dinh-sibony:equid-speed}. 

Another natural question is whether $\mu^{\pm}$ describe the distribution of periodic points. We postpone these questions to a later work. 
We expect that about half of periodic the points are repelling and equidistributed with respect to $\mu^+$ and about half of them are contracting and equidistributed with respect to $\mu^-$, see Proposition \ref{p:graph} below. This indicates that the system seems to be equally expansive and contractive. This is a surprising new phenomenon.

Our methods can also be used to study the dynamics of a collection of automorphisms. An interesting case consists of considering the action of a collection of M\"obius transformations acting on $\P^1$, e.g.\ ,generators of a Fuschian or Kleinian group. Some results in this direction shall appear in a forthcoming paper.
\\

\textbf{Acknowledgements:} This work was supported by the NUS grants C-146-000-047-001,  AcRF Tier 1 R-146-000-248-114  and R-146-000-259-114.

\section{Preliminary results} \label{sec:preliminaries}

Let $X$ be a compact Riemann surface. As seen in the introduction, a correspondence $f$ on $X$ is given by its graph $\Gamma = \Gamma_f \subset X \times X$ and we denote by $d_i(f)$, $i=1,2$, the degree of $\pi_i|_\Gamma$. We sometimes identify $\Gamma$ with its support and we often assume that  $d_1(f) = d_2(f) = d \geq 2$.

\subsection*{Composition and pullback} If $f$ and $g$ are two correspondences on $X$ we define $f \circ g$ in the following way. Suppose first that $\Gamma_f$ and $\Gamma_g$ are irreducible subvarieties. The product $\Gamma_g \times \Gamma_f$ is naturally included in $X^4 = \{(x_1,x_2,x_3,x_4) : x_i \in X\}$. Define $\widehat \Gamma_{f \circ g} = (\Gamma_g \times \Gamma_f) \cap \{x_2=x_3\}$ and let $ \Gamma_{f \circ g}$ be the push forward of $\widehat \Gamma_{f \circ g}$ in the sense of cycles by the projection $(x_1,x_2,x_3,x_4) \mapsto (x_1,x_4)$. Then $f \circ g$ is by definition the correspondence whose graph is $ \Gamma_{f \circ g}$. If $z \in f \circ g(x)$, the pushforward takes into account the number of $y$'s such that $y \in g(x)$ and $z \in f(y)$.  In general, we write $\Gamma_f = \sum_i \Gamma_f^i$ and $\Gamma_g = \sum_j \Gamma_g^j$ where $\Gamma^i_f$ and $\Gamma^j_g$ are irreducible and extend the above definition by linearity.

The composition is associative and $d_j(f \circ g) = d_j(f) \cdot d_j(g)$, $j=1,2$. In particular we can consider iterates of $f$ and we have $d_j(f^n) = d_j(f)^n$ for every $n\geq 1$.

A correspondence induces a push-forward operator and a pullback operator on currents. When $S$ is a smooth form, a continuous function or a finite measure, we have
$$ f_*(S):=(\pi_2)_*(\pi_1^*(S)\wedge [\Gamma]) \quad \text{and} \quad f^*(S):=(\pi_1)_*(\pi_2^*(S)\wedge[\Gamma]).$$
The operators $f^*$ and $f_*$ preserve the degree of currents and when $S$ is a smooth form, $f^*(S)$ and $f_*(S)$ will be smooth forms outside some finite sets related to ramification points (see below for more details). If $\varphi$ is a continuous function we have $(f_*\varphi)(y) = \sum_{x \in f^{-1}(y)} \varphi(x)$ where the points are counted with multiplicity. One can easily see that if $\mu$ is a probability measure on $X$ then $f_* (\mu)$ and $f^*(\mu)$ are positive measures whose total masses are $d_1(f)$ and $d_2(f)$ respectively.

\subsection*{Ramification locus and local branches} From now on, assume that $d_1(f) = d_2(f) = d \geq 2$. 
A  \textit{branch} of $f$ over an open subset $U$ of $X$ is a biholomorphic map $\tau: U \to V \subset X$ whose graph is contained in $\Gamma$. We say that two branches are different if their graphs are different. 
When the components of $\Gamma$ are not distinct, the branches of $f$ need to be counted with multiplicity. 
An \textit{inverse branch} of $f$ is a branch of $f^{-1}$.

Denote by $R_i$, $i=1,2$, the finite set of points $a\in\Gamma$ such that in any neighbourhood $W$ of $a$, $\pi_i$ is not injective on at least 
 one component of $\Gamma\cap W$.  Define $B_i := \pi_i(R_i)$. 
We say that $B_2$ (resp. $B_1$) is {\it the set of critical values} for $f$ (resp. for $f^{-1}$) and $R_2$ (resp. $R_1$) is {\it the set of ramification points} of $f$ (resp. $f^{-1}$). 
Note that on any simply connected open subset of $X\setminus B_2$, $f$ admits  exactly $d$ inverse branches, counting multiplicity.
A similar property holds for  $X\setminus B_1$ and $f^{-1}$.

\subsection*{Action on $L^2$ forms and a Cauchy--Schwarz type inequality} Consider the space 
$$L^2_{(1,0)} := \big\{\phi : \phi \text{ is  a } (1,0) \text{-form on } X \text{ with } L^2 \text{ coefficients} \big\}.$$
Observe that if  $\phi \in L^2_{(1,0)}$, then  $i \phi \wedge \overline \phi$ is a positive $(1,1)$-form with $L^1$ coefficients. Therefore, we can equip $L^2_{(1,0)}$ with the norm
\begin{equation} \label{eq:L2norm}
\|\phi\|_{L^2} = \Big( \int_X i \phi \wedge \overline \phi \Big)^{1/2}.
\end{equation}
We define $L^2_{(0,1)}$ and its norm in the same way.

The following simple result will be crucial for us.

\begin{proposition} \label{prop:CS-ineq}
Let $f$ be as above with $d_1(f)=d_2(f)=d$. Then the operator $f^*$, which is well-defined on smooth $(1,0)$-forms (resp. $(0,1)$-forms), extends continuously to a linear bounded operator $f^*:L^2_{(1,0)}\to L^2_{(1,0)}$ (resp. $f^*:L^2_{(0,1)}\to L^2_{(0,1)}$) with norm bounded by $d$, that is, for $\phi$ in $L^2_{(1,0)}$ (resp. $L^2_{(0,1)}$), we have the inequality $\|f^*\phi\|_{L^2}\leq d\|\phi\|_{L^2}$. Moreover, the equality holds 
if and only if for any open set $ U \subset X$ avoiding the critical values of $f^{-1}$ and any two local branches $\tau_1$, $\tau_2$ of $f$ defined in $U$, we have $\tau_1^* \phi = \tau_2^* \phi$ on $U$. 
\end{proposition}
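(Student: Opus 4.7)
The strategy is a pointwise Cauchy--Schwarz estimate on local branches, combined with the change-of-variable formula for finite maps between Riemann surfaces.

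First I would work away from the finite critical set $B_2$ and express $f^*\phi$ via local branches. Factoring $f^* = (\tilde\pi_1)_* \tilde\pi_2^*$ through the normalization $\tilde\Gamma$ of $\Gamma$, on any simply connected open $U \subset X \setminus B_2$ the map $\tilde\pi_1$ admits $d$ local holomorphic inverses $\sigma_1,\dots,\sigma_d$, and $\tau_j := \tilde\pi_2 \circ \sigma_j$ are the $d$ local branches of $f$ over $U$ (with the appropriate multiplicities if $\Gamma$ has repeated components). This yields, for smooth $\phi$, the identity
\begin{equation*}
f^*\phi \big|_U \;=\; \sum_{j=1}^d \tau_j^*\phi.
\end{equation*}

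Next I would apply pointwise the elementary Hermitian Cauchy--Schwarz inequality $|v_1 + \cdots + v_d|^2 \leq d(|v_1|^2 + \cdots + |v_d|^2)$ (with equality iff all $v_j$ coincide) to $v_j = \tau_j^*\phi(x)$, regarded as vectors in the fibre of the $(1,0)$-cotangent bundle. This gives the pointwise inequality of positive $(1,1)$-forms
\begin{equation*}
i\,(f^*\phi) \wedge \overline{f^*\phi} \;\leq\; d \sum_{j=1}^d i\,\tau_j^*\phi \wedge \overline{\tau_j^*\phi}
\end{equation*}
on $X \setminus B_2$. Integrating over $X$ (the critical set having measure zero) and using the change of variable $\tau_j = \tilde\pi_2 \circ \sigma_j$ to reassemble the sum on $\tilde\Gamma$, together with the fact that $\tilde\pi_2$ has degree $d$, I would obtain
\begin{equation*}
\sum_{j=1}^d \int_X i\,\tau_j^*\phi \wedge \overline{\tau_j^*\phi} \;=\; \int_{\tilde\Gamma} i\,\tilde\pi_2^*\phi \wedge \overline{\tilde\pi_2^*\phi} \;=\; d \int_X i\,\phi \wedge \overline\phi \;=\; d\,\|\phi\|_{L^2}^2.
\end{equation*}
Combining the two displays gives $\|f^*\phi\|_{L^2}^2 \leq d^2 \|\phi\|_{L^2}^2$, which is the desired bound on smooth forms. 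Density of smooth $(1,0)$-forms in $L^2_{(1,0)}$ then extends $f^*$ by continuity to the whole space with norm $\leq d$; the argument for $(0,1)$-forms is identical by conjugation.

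Finally, for the equality statement, note that the only inequality in the chain above is the pointwise Cauchy--Schwarz; the second step is an identity. Thus $\|f^*\phi\|_{L^2} = d\,\|\phi\|_{L^2}$ forces $\tau_j^*\phi = \tau_k^*\phi$ almost everywhere on any $U \subset X \setminus B_2$ and for any two local branches $\tau_j, \tau_k$, which is the stated criterion (passing to smooth approximants justifies the pointwise reading whenever $\phi$ is smooth). The principal point of care is the bookkeeping of multiplicities when the components $\Gamma_i$ of $\Gamma$ are not distinct, which is handled transparently once one works on the normalization $\tilde\Gamma$ and counts branches with their multiplicities; beyond that, the proof is entirely elementary.
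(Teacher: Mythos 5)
Your proof is correct and follows essentially the same route as the paper: decompose $f^*\phi$ into the $d$ local branches over the complement of a finite critical set, apply the pointwise Cauchy--Schwarz inequality $\bigl|\sum_j v_j\bigr|^2 \le d\sum_j |v_j|^2$, integrate using that $f^*$ multiplies the mass of a positive measure by $d$, extend by density, and read the equality case off the Cauchy--Schwarz defect (the paper keeps the exact Lagrange identity with the explicit defect term $\sum_{j<l} i(\tau_j^*\phi-\tau_l^*\phi)\wedge\overline{(\tau_j^*\phi-\tau_l^*\phi)}$, which it reuses later, and it spells out that for merely $L^2$ forms the branch identity and the absence of mass on finite sets persist by continuity). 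One notational slip: in the paper's conventions the forward branches $\tau_j$ of $f$ are defined over $X\setminus B_1$ (the critical values of $f^{-1}$, where $\pi_1|_\Gamma$ ramifies), not $X\setminus B_2$.
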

\begin{proof}
We only consider the case of bi-degree $(1,0)$ since the case of bi-degree $(0,1)$ can be treated in the same way. 
Note that the last property in the proposition is local on $X\setminus B_1$. Hence, it is enough to consider small open subsets $U$ of $X\setminus B_1$ which are simply connected.  Thus, $f$ has $d$ branches $\tau_j: U \to V_j$ over $U$ counting the multiplicity.

Consider first the case where $\phi$ is smooth. 
We have $(f^*\phi)|_U = \sum_{j=1}^d \tau_j^*\phi$. So on the open set $U$ we have
\begin{eqnarray*}
i f^* \phi \wedge \overline {f^* \phi} &= & i \Big(\sum_{1\leq j \leq d} \tau_j^* \phi \Big) \wedge \Big(\sum_{1\leq j \leq d} \tau_j^* \overline \phi \Big)  \\ 
&=& d  \sum_{1\leq j \leq d} i \tau_j^* \phi \wedge \overline{\tau_j^* \phi} - \sum_{1\leq j<l\leq d} i (\tau^*_j\phi - \tau^*_l \phi) \wedge \overline{(\tau^*_j \phi- \tau^*_l  \phi)} \\ 
& = & d \, f^*(i \phi \wedge \overline \phi) - \sum_{1\leq j<l\leq d} i (\tau^*_j\phi - \tau^*_l \phi) \wedge \overline{(\tau^*_j \phi- \tau^*_l  \phi)} \\ 
& \leq & d \, f^*(i \phi \wedge \overline \phi)
\end{eqnarray*}
with the equality holding if and only if $\tau_j^* \phi = \tau_l^* \phi$ on $U$ for every $j,l$.

From the fact that finite sets have zero Lebesgue measure and that for any positive measure $m$ the mass of the measure $f^* m$ is $d$ times the mass of $m$, we get 
$$\int_X i  f^* \phi \wedge \overline{f^* \phi} \leq d  \int_X f^*(i \phi \wedge \overline \phi) = d^2 \int_X i \phi \wedge \overline \phi,$$
which gives $\|f^* \phi\|_{L^2} \leq d\|\phi\|_{L^2}$. This is the desired estimate for $\phi$ smooth.

Now, since smooth forms are dense in $L^2_{(1,0)}$, the operator $f^*$ extends continuously to an operator on $L^2_{(1,0)}$ with norm bounded by $d$. By continuity, the identity $(f^*\phi)|_U = \sum_{j=1}^d \tau_j^*\phi$ still holds for $\phi\in L^2_{(1,0)}$ and $f^*\phi$ has no mass on finite sets as it is of class $L^2$. We conclude that the above computation still holds for $\phi\in L^2_{(1,0)}$ and we have the equality $\|f^* \phi\|_{L^2} = d\|\phi\|_{L^2}$ if and only if $\tau_j^* \phi = \tau_l^* \phi$ on $U$ for every $j,l$.
\end{proof}

\begin{remark} \rm
Since $f_*=(f^{-1})^*$, Proposition \ref{prop:CS-ineq} also holds for $f_*$ instead of $f^*$. In this case, we need to consider open subsets $U$ of $X\setminus B_2$ and inverse branches of $f$ on $U$. 
\end{remark}

The following lemma will be used later.

\begin{lemma} \label{l:L2-branches}
Let $\phi$ be a form in $L^2_{(1,0)}$  such that $\|\phi\|_{L^2}=1$. Define 
$$\nu_\phi:= df^*(i \phi \wedge \overline \phi) - i  f^* \phi \wedge \overline{f^* \phi}.$$
Let $\tau_1,\tau_2,\tau_3$ and $\tau_4$ be arbitrary branches of $f$ on some open set $U$. Then we have the following mass estimate
$$\|\tau_1^*\phi\wedge \overline{\tau_2^*\phi} -  \tau_3^*\phi\wedge \overline{\tau_4^*\phi}\| \leq 2d^{1/2} \|\nu_\phi\|^{1/2}.$$
\end{lemma}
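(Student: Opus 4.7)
The plan is to leverage the identity that already appears in the proof of Proposition \ref{prop:CS-ineq}: on any simply connected open set $U\subset X\setminus B_1$, the correspondence $f$ has $d$ local branches $\sigma_1,\dots,\sigma_d$ (counted with multiplicity), and
$$\nu_\phi|_U = \sum_{1\leq j<l\leq d} i(\sigma_j^*\phi - \sigma_l^*\phi)\wedge \overline{(\sigma_j^*\phi - \sigma_l^*\phi)}.$$
In particular, $\nu_\phi$ is a positive $(1,1)$-form and each summand on the right is positive. Since $B_1$ is finite and therefore has no effect on $L^2$ integrals of forms, I first reduce to the case where $U$ is simply connected and disjoint from $B_1$; in this situation the four branches $\tau_1,\tau_2,\tau_3,\tau_4$ appearing in the statement are each among the $\sigma_j$'s, possibly with repetition.

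From the positivity of each summand, for any pair of branches $\sigma_a, \sigma_b$ one has
$$\|\sigma_a^*\phi - \sigma_b^*\phi\|_{L^2(U)}^2 = \int_U i(\sigma_a^*\phi - \sigma_b^*\phi)\wedge \overline{(\sigma_a^*\phi - \sigma_b^*\phi)} \leq \int_X \nu_\phi = \|\nu_\phi\|,$$
so in particular $\|\tau_a^*\phi - \tau_b^*\phi\|_{L^2(U)} \leq \|\nu_\phi\|^{1/2}$ for any two among the four. Moreover, since each $\tau_j$ is biholomorphic onto its image $\tau_j(U)\subset X$, we have $\|\tau_j^*\phi\|_{L^2(U)} = \|\phi\|_{L^2(\tau_j(U))} \leq \|\phi\|_{L^2} = 1 \leq d^{1/2}$.

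The conclusion follows from a telescoping identity combined with the Cauchy--Schwarz bound $\|\alpha\wedge\overline\beta\| \leq \|\alpha\|_{L^2}\|\beta\|_{L^2}$ for the mass of $(1,1)$-forms built from $(1,0)$-forms on a Riemann surface. Namely, I would write
$$\tau_1^*\phi\wedge\overline{\tau_2^*\phi} - \tau_3^*\phi\wedge\overline{\tau_4^*\phi} = (\tau_1^*\phi - \tau_3^*\phi)\wedge\overline{\tau_2^*\phi} + \tau_3^*\phi\wedge\overline{(\tau_2^*\phi - \tau_4^*\phi)},$$
take the mass of each summand and apply the previous two estimates, yielding at most $\|\nu_\phi\|^{1/2}\cdot d^{1/2} + d^{1/2}\cdot\|\nu_\phi\|^{1/2} = 2d^{1/2}\|\nu_\phi\|^{1/2}$ on $U$. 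The only genuine obstacle is verifying carefully the local decomposition of $\nu_\phi$ in the presence of multiple irreducible components of $\Gamma$ and ramification, so that each summand is positive and one can freely single out any pair of branches; this is already handled modulo finite sets in the proof of Proposition \ref{prop:CS-ineq}, so no new analytic ingredient is required.
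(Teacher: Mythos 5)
Your proof is correct and follows essentially the same route as the paper's: the same telescoping decomposition of $\tau_1^*\phi\wedge\overline{\tau_2^*\phi}-\tau_3^*\phi\wedge\overline{\tau_4^*\phi}$ into two bilinear terms, Cauchy--Schwarz on each, and the bound $\|\tau_a^*\phi-\tau_b^*\phi\|_{L^2}\leq\|\nu_\phi\|^{1/2}$ coming from the positivity of each summand in the local decomposition of $\nu_\phi$. The only (harmless) difference is that you bound $\|\tau_j^*\phi\|_{L^2(U)}$ by $1$ using that $\tau_j$ is a biholomorphism, whereas the paper uses the cruder bound $\|f^*(i\phi\wedge\overline\phi)\|^{1/2}=d^{1/2}$; your version actually yields the slightly stronger constant $2\|\nu_\phi\|^{1/2}$.
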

\proof
We can write $\tau_1^*\phi\wedge \overline{\tau_2^*\phi} -  \tau_3^*\phi\wedge \overline{\tau_4^*\phi}$ as 
\begin{equation} \label{e:CS-branch}
\tau_1^*\phi\wedge \overline{(\tau_2^*\phi -\tau_4^*\phi)}  +  (\tau_1^*\phi-\tau_3^*\phi)\wedge \overline{\tau_4^*\phi}.
\end{equation}
We estimate the first term of the last sum. By Cauchy-Schwarz's inequality, we have
$$\|\tau_1^*\phi\wedge \overline{(\tau_2^*\phi -\tau_4^*\phi)}\| \leq \|\tau_1^* \phi\|_{L^2}\|\tau_2^*\phi -\tau_4^*\phi\|_{L^2} \leq \|f^*(i\phi\wedge\overline\phi)\|^{1/2} \|\nu_\phi\|^{1/2} = d^{1/2} \|\nu_\phi\|^{1/2}$$
because  $f^*$ multiplies the mass of a positive measure by $d$ and 
from the proof of Proposition \ref{prop:CS-ineq},  the positive measure 
$$i(\tau_2^*\phi -\tau_4^*\phi)\wedge \overline{(\tau_2^*\phi -\tau_4^*\phi)}$$
is bounded by $\nu_\phi$. A similar estimate holds for the second term in \eqref{e:CS-branch}. The lemma follows easily.
\endproof

\subsection*{Action of $f$ on Sobolev space}
We now consider a Sobolev space of functions on $X$ and study the action of $f_*$ and $f^*$ on it. 
Let $\omega$ be a fixed Kähler form on $X$ such that $\int_X \omega=1$. For a real valued function $h$ on $X$ we define 
$$\|h\|_{W^{1,2}} := \Big| \int_X  h \, \omega \Big| + \|\partial h\|_{L^2}$$ 
and we let $W^{1,2}$ be the space of measurable functions with finite  $\|\cdot\|_{W^{1,2}}$ norm. This space and their higher dimensional counterparts were studied in \cite{dinh-sibony:decay-correlations} and \cite{vigny:dirichlet}. In the following statement and throughout the paper the symbol $\lesssim$ means an inequality up to a positive multiplicative constant.
	
	\begin{prop}\label{equinorm}
		Let $U$ be a non-empty open subset of $X$ and consider the positive measure $\nu := \mathbf{1}_U \omega$. Then the following norms on $W^{1,2}$ are equivalent to the norm $\|\cdot\|_{W^{1,2}}$.
		\vspace{-15pt}
\begin{multicols}{2}
\begin{enumerate}
        \item $\|h\|_1 := \int_X |h| \omega +\|\partial h\|_{L^2}$ \medskip
        \item $ \|h\|_2 := \|h\|_{L^2}+\|\partial h\|_{L^2}$
        \columnbreak
        \vspace*{\fill}
        \item $\|h\|_3 := |\int_X h \nu|+\|\partial h\|_{L^2}$ \medskip
        \item $\|h\|_4 := \int_X |h| \nu+\|\partial h\|_{L^2}$.
\end{enumerate}
\end{multicols}
	\end{prop}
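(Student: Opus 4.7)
My plan is to reduce all four norm equivalences to a single input: the standard Poincaré inequality on the compact Kähler surface $(X,\omega)$, which (via Rellich--Kondrachov and the fact that there are no nonconstant holomorphic functions on a compact connected Riemann surface) yields a constant $C>0$ such that
$$\|h-\bar h\|_{L^2}\leq C\,\|\partial h\|_{L^2}, \qquad \bar h:=\int_X h\,\omega,$$
for every real-valued $h\in W^{1,2}$. Here I used that $\|dh\|_{L^2}^2 = 2\|\partial h\|_{L^2}^2$. Everything else is Cauchy--Schwarz bookkeeping.

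First I would handle norms (1) and (2), which do not involve $U$. Since $\omega$ has total mass $1$, Cauchy--Schwarz gives $|\bar h|\leq \int_X|h|\omega\leq \|h\|_{L^2}$, producing the easy chain $\|h\|_{W^{1,2}}\leq \|h\|_1\leq \|h\|_2$. For the reverse, Poincaré yields
$$\|h\|_{L^2}\leq |\bar h|+\|h-\bar h\|_{L^2}\lesssim |\bar h|+\|\partial h\|_{L^2}=\|h\|_{W^{1,2}},$$
so $\|h\|_2\lesssim \|h\|_{W^{1,2}}$. This closes the loop for (1) and (2).

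The key step, and the only one with real content, is the equivalence with $\|\cdot\|_3$ and $\|\cdot\|_4$, where the measure $\nu=\mathbf 1_U\,\omega$ may concentrate on a very small $U$ (but $\omega(U)>0$ as $U$ is nonempty and open). The main computation is to compare the mean $\bar h$ with the weighted average $\frac1{\omega(U)}\int_X h\,\nu$. Writing $\bar h\cdot \omega(U)=\int_X\bar h\,\nu$ gives
$$|\bar h|\,\omega(U)\leq \Big|\int_X h\,\nu\Big|+\Big|\int_X (h-\bar h)\,\nu\Big|\leq \Big|\int_X h\,\nu\Big|+\omega(U)^{1/2}\,\|h-\bar h\|_{L^2},$$
by Cauchy--Schwarz, and then Poincaré absorbs the last term into $\|\partial h\|_{L^2}$. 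Therefore $|\bar h|\lesssim |\int_X h\,\nu|+\|\partial h\|_{L^2}$, which gives $\|h\|_{W^{1,2}}\lesssim \|h\|_3$. The opposite bound $\|h\|_3\lesssim \|h\|_{W^{1,2}}$ is immediate from $|\int_X h\,\nu|\leq \int_X|h|\,\omega\leq \|h\|_{L^2}\lesssim \|h\|_{W^{1,2}}$ via the already-established equivalence with $\|\cdot\|_2$.

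Finally, $\|\cdot\|_4$ slots in between: $\|h\|_3\leq \|h\|_4$ is trivial, and $\int_X|h|\,\nu\leq \int_X|h|\,\omega\leq \|h\|_{L^2}\lesssim \|h\|_{W^{1,2}}$ gives $\|h\|_4\lesssim \|h\|_{W^{1,2}}$, completing the four equivalences. The main obstacle — really the only nontrivial point — is the weighted Poincaré-type comparison between $\bar h$ and $\nu(X)^{-1}\int h\,\nu$; I have reduced it above to a direct Cauchy--Schwarz plus the unweighted Poincaré inequality, with the constants depending only on $\omega(U)$ and on $X$.
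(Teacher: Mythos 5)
Your proof is correct and follows essentially the same route as the paper's: the Poincar\'e--Sobolev inequality $\|h-\bar h\|_{L^2}\lesssim\|\partial h\|_{L^2}$ combined with Cauchy--Schwarz, splitting $h$ into its $\omega$-mean plus a zero-mean remainder $h_0$, and then the same triangle-inequality comparison between $\bar h\,\omega(U)$ and $\int_X h\,\nu$ (the paper writes this as $m\,\nu(U)=\int_X h\,\nu-\int_X h_0\,\nu$, which is literally your identity $\bar h\,\omega(U)=\int_X\bar h\,\nu$ rearranged). The only organizational difference is that the paper proves the chain $\|\cdot\|_{W^{1,2}}\lesssim\|\cdot\|_2\lesssim\|\cdot\|_3\lesssim\|\cdot\|_4\lesssim\|\cdot\|_{W^{1,2}}$ and recovers $\|\cdot\|_1$ as the special case $U=X$ of $\|\cdot\|_4$, whereas you treat $(1),(2)$ and $(3),(4)$ as two blocks; both are valid and the content is identical.
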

    \begin{proof}
      By H\"{o}lder's inequality, we have $|\int_X h\omega|\leq\|h\|_{L^2}\cdot \|1\|_{L^2}$, so $\|\cdot\|_{W^{1,2}} \lesssim \| \cdot \|_2$.
      
      To show that $\|\cdot\|_2 \lesssim \| \cdot \|_3$, let $m := \int_X h\omega$ and $h_0:=h-m$.  By Poincar\'e-Sobolev's inequality, we have $\|h_0\|_{L^2} \lesssim  \|\partial h\|_{L^2}$ which implies that $\|h\|_{L^2}\lesssim \|\partial h\|_{L^2}+|m|$. On the other hand 
      $$|m|\lesssim \Big |m \nu(U) + \int_X h_0 \nu \Big | + \Big |\int_X h_0 \nu \Big | \leq  \Big|\int_X  h \nu \Big| + |\int_X |h_0| \omega 
      \lesssim \Big|\int_X  h \nu \Big| + \|\partial h\|_{L^2},$$
      where we have used  H\"{o}lder's and Poincar\'e-Sobolev's inequalities for $h_0$. It follows that  $\|h\|_{L^2} \lesssim|\int_X  h\nu|+\|\partial h\|_{L^2}$ which implies $\|\cdot\|_2 \lesssim\|\cdot\|_3$.
        
        The proof of $\| \cdot \|_3 \lesssim\|\cdot\|_4$ is trivial. We now show that  $\| \cdot \|_4 \lesssim\|\cdot\|_{W^{1,2}}$. Arguing as above and setting $U=X$ gives $\|h\|_{L^2} \lesssim|\int_X  h\omega|+\|\partial h\|_{L^2}$, so 
        $$\int_X |h| \nu\ \leq \int_X |h| \omega \leq \|h\|_{L^2} \lesssim \Big|\int_X  h\omega \Big|+\|\partial h\|_{L^2},$$ 
        giving $\| \cdot \|_4 \lesssim\|\cdot\|_{W^{1,2}}$.
        
        So far we have shown that the norms $\|\cdot\|_{W^{1,2}}, \| \cdot \|_2, \| \cdot \|_3$ and $\| \cdot \|_4$ are all equivalent. Since $\| \cdot \|_1$ is nothing but $\| \cdot \|_4$ when $U=X$, the proof is complete.
      \end{proof}
      
    \begin{prop}\label{boundop}
    	Let $f$ be a correspondence on $X$ as above. Then the operator $f^*$, acting on smooth functions, extends continuously to a bounded linear operator from $W^{1,2}$ to itself.
    \end{prop}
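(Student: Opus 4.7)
The plan is to establish $\|f^*h\|_{W^{1,2}} \lesssim \|h\|_{W^{1,2}}$ for smooth $h$ and then extend by density. The strategy is to invoke Proposition \ref{equinorm} with a localizing measure $\nu = \mathbf{1}_U\omega$, choosing $U$ to avoid the critical locus $B_1$ so that $f^*h$ can be evaluated on $U$ through the local biholomorphic branches of $f$. Specifically, fix a non-empty simply connected open set $U$ with $\overline{U} \subset X \setminus B_1$; then $f$ has $d$ biholomorphic branches $\tau_1, \ldots, \tau_d : U \to V_j := \tau_j(U)$ and $(f^*h)|_U = \sum_{j=1}^d \tau_j^*h$. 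By Proposition \ref{equinorm}, it suffices to bound $\int_U |f^* h|\omega$ and $\|\partial(f^* h)\|_{L^2}$ in terms of $\|h\|_{W^{1,2}}$.

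For the derivative term, since each $\tau_j$ is holomorphic, $\partial(\tau_j^*h) = \tau_j^*(\partial h)$, and summing over $j$ gives $\partial(f^*h) = f^*(\partial h)$ on $X \setminus B_1$. Because $f^*h$ is continuous on $X$ and $f^*(\partial h) \in L^2_{(1,0)}$ by Proposition \ref{prop:CS-ineq}, this identity extends as an equality of $L^2$-forms on all of $X$, and Proposition \ref{prop:CS-ineq} then yields $\|\partial(f^*h)\|_{L^2} \leq d\|\partial h\|_{L^2}$. For the zero-order term, the triangle inequality and a change of variables give
$$\int_U |f^*h|\omega \leq \sum_j \int_U |\tau_j^*h|\omega = \sum_j \int_{V_j}|h|(\tau_j)_*\omega.$$
Since each $\tau_j$ is biholomorphic on a neighborhood of the compact set $\overline U$, its derivative is bounded away from $0$ and $\infty$, so $(\tau_j)_*\omega \leq C\omega$ on $V_j$ for a uniform constant $C$. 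Hence $\int_U |f^*h|\omega \leq dC\int_X |h|\omega \lesssim \|h\|_{W^{1,2}}$, where the last step applies Proposition \ref{equinorm} once more with $U=X$.

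The main obstacle will be justifying the extension of $\partial(f^*h) = f^*(\partial h)$ from $X \setminus B_1$ to all of $X$, since a priori $f^*h$ could carry singular distributional derivatives supported on the finite set $B_1$. This is resolved by observing that $f^*h$ is genuinely continuous on $X$---the map $x \mapsto \sum_{y \in f(x)} h(y)$ depends continuously on $x$ even across ramification values when $h$ is continuous---so that, combined with the $L^2$-regularity of $f^*(\partial h)$ off $B_1$, the finite set $B_1$ is removable for the Sobolev structure and the identity propagates to all of $X$.
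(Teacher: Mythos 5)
Your proof is correct and follows essentially the same route as the paper: the derivative term is handled by $\partial(f^*h)=f^*(\partial h)$ together with Proposition \ref{prop:CS-ineq}, and the zero-order term by localizing to an unramified open set and pushing the volume form forward through the branches (your $\sum_j(\tau_j)_*(\mathbf{1}_U\omega)$ is exactly the paper's $f_*\nu\leq c'\omega$), before invoking Proposition \ref{equinorm}. The only difference is that you spell out the removability of the finite set $B_1$ for the identity $\partial(f^*h)=f^*(\partial h)$, a point the paper leaves implicit.
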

    \begin{proof}
    Since smooth functions are dense in $W^{1,2}$ it is enough, as in the proof of Proposition \ref{prop:CS-ineq}, to consider a smooth function $h$ and show that $\|f^*h\|_{W^{1,2}} \leq c\|h\|_{W^{1,2}}$ for some constant $c>0$ independent of $h$.
     
 By Proposition \ref{prop:CS-ineq},  we have 
 $$\|\partial(f^*h)\|_{L^2}=\|f^*(\partial h)\|_{L^2}\leq d\|\partial h\|_{L^2} \leq d \|h\|_{W^{1,2}}.$$ 
 Choose a small open subset $V$ of $X$ such that $f$ is unramified over $V$ and let $\nu:=\mathbf{1}_{V}\omega$. Then $f_*\nu$ is a positive $(1,1)$-form with bounded coefficients in $f(V)$ and hence $f_*\nu\leq c'\omega$ for some constant $c'>0$. Therefore, we have 
$$\int_X |f^*h|\nu\leq \lp f^*|h|,\nu\rp=\lp|h|,f_*\nu\rp\leq c'\lp|h|,\omega\rp=c' \int_X |h|\omega.$$ 
Now, it is enough to apply Proposition \ref{equinorm} and obtain $\|f^*h\|_{W^{1,2}} \leq c\|h\|_{W^{1,2}}$ for some constant $c>0$.
\end{proof}

\section{Non weakly modular correspondences}

In this section, we introduce the notion of weakly modular correspondences, which is motivated by Proposition \ref{prop:CS-ineq}. For non-weakly modular correspondences, we construct the canonical measures  $\mu^+$ and $\mu^-$ given by Theorem \ref{thm:main-theorem}.

    \begin{defn} \label{def:non-weakly-modular} \rm
	Let $f$ be a correspondence on $X$ such that $f$ and $f^{-1}$ have the same topological degree. We say that $f$ is {\it weakly modular} if there exists a  positive measure $m$ on $\Gamma$ and probability measures $m_1,m_2$ on $X$ such that 
	\[m=(\pi_1|_\Gamma)^*(m_1) \quad \text{and} \quad m=(\pi_2|_\Gamma)^*(m_2).\]
    \end{defn}
    
    Notice that the definition makes sense even if $X$ has dimension higher than one and in this case, we can also consider positive closed currents instead of measures. The following example justifies the choice of the name ``weakly modular'' used in the above definition.

\begin{example}[Modular correspondences] \label{ex:modular-correspondence} \rm
Let $G$ be a connected Lie group, $\Lambda \subset G$ be a torsion-free lattice and $K$ a compact Lie subgroup of $G$. The Haar measure on $G$ induces an invariant probability measure $ \lambda$ on the quotient space $X: = \Lambda\backslash G/K$. Let $g \in G$ be such that $\Lambda_g := (g^{-1} \Lambda g ) \cap \Lambda$ has finite index $d_g$ in $\Lambda$.

By passing to the quotient, the map $x \mapsto (x,gx)$ from $G$ to $G \times G$ induces a map $\Lambda_g\backslash G/K \to  X \times  X$. Denote by $\Gamma_g$ its image. An irreducible modular correspondence is a correspondence $f_g$ whose graph is  $\Gamma_g$ for some $g$ as above. We have $d_1(f_g) = d_2(f_g) = d_g$.

The two projections from $ X \times  X$ to $ X$ restricted to $\Gamma_{g}$ are non ramified and locally isometric with respect to any metric on $\Gamma_g$ induced by invariant metrics on $G$. This implies that $(\pi_1|_{\Gamma_g})^* \lambda = (\pi_2|_{\Gamma_g})^* \lambda$, so $f_g$ is weakly modular.

In general a  modular correspondence $f$ is a correspondence on $X$ whose graph is a sum  $\Gamma_f =\Gamma_{g_1} + \cdots + \Gamma_{g_m}$. It follows that $f$ is also weakly modular. For more on modular correspondences, see \cite{mok:correspondences,clozel-ullmo,dinh:modular}. In particular, modular correspondences can be characterized by the existence of some invariant measure or metric, \cite{clozel-ullmo,mok-ng}.
\end{example}

The main feature about non-weakly modular correspondences is that the operator $\frac1{d}f^*$ acting on $(1,0)$-forms is contracting. This will be crucial for the construction of the measures $\mu^+$ and $\mu^-$.

		\begin{proposition}\label{normless1}
		Let $f$ be a non-weakly modular correspondence of degree $d \geq 2$ on a compact Riemann surface $X$. Consider the operators $\frac1{d}f^*$ and $\frac{1}{d} f_*$ acting on $L^2_{(1,0)}$ which is equipped with its natural $L^2$ norm. Then we have $\|\frac1{d}f^*\|<1$ and $\|\frac1{d}f_*\|<1$.
	\end{proposition}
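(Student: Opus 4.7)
The approach is by contradiction. Suppose $\|\tfrac{1}{d}f^*\|_{L^2_{(1,0)}}=1$; I will construct a positive measure $m$ on $\Gamma$ and probability measures $m_1,m_2$ on $X$ satisfying $m=\pi_1^*m_1=\pi_2^*m_2$, showing $f$ is weakly modular and thereby contradicting the hypothesis. The statement for $f_*$ then follows by applying the same reasoning to $f^{-1}$, which is also non-weakly modular since Definition \ref{def:non-weakly-modular} is symmetric in $f$ and $f^{-1}$ (swap the two projections and swap $m_1,m_2$).

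First, pick a maximizing sequence $\phi_n\in L^2_{(1,0)}$ with $\|\phi_n\|_{L^2}=1$ and $\|f^*\phi_n\|_{L^2}\to d$. By the computation inside the proof of Proposition \ref{prop:CS-ineq}, the positive measure $\nu_{\phi_n}=df^*(i\phi_n\wedge\overline{\phi_n})-if^*\phi_n\wedge\overline{f^*\phi_n}$ of Lemma \ref{l:L2-branches} has total mass $d^2-\|f^*\phi_n\|_{L^2}^2\to 0$. Set $\rho_n:=i\phi_n\wedge\overline{\phi_n}$, a probability measure on $X$; passing to a subsequence, $\rho_n\to\rho$ weakly, with $\rho$ a probability measure. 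I then lift to the graph and push down: put $m_n:=\pi_2^*\rho_n$ on $\Gamma$ (of total mass $d$) and $\sigma_n:=\tfrac{1}{d}(\pi_1)_*m_n=\tfrac{1}{d}f_*\rho_n$ on $X$ (a probability measure). Because the branched covers $\pi_1,\pi_2:\Gamma\to X$ are proper and finite, the pushforwards $(\pi_i)_*$ preserve continuity of functions, so by duality one obtains the weak convergences $m_n\to m:=\pi_2^*\rho$ on $\Gamma$ and $\sigma_n\to\sigma:=\tfrac{1}{d}f_*\rho$ on $X$. Once the identity $m=\pi_1^*\sigma$ is established, the triple $(m,m_1,m_2):=(m,\sigma,\rho)$ witnesses weak modularity.

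To verify $m=\pi_1^*\sigma$, I bound $\|m_n-\pi_1^*\sigma_n\|$ in total variation. Cover $X\setminus B_1$ by finitely many simply connected open sets $U$ on which $f$ admits branches $\tau_1,\dots,\tau_d$. On the $j$-th sheet of $\pi_1^{-1}(U)$, identified with $U$ via $\pi_1$, the measure $m_n$ restricts to $\tau_j^*\rho_n$ while $\pi_1^*\sigma_n$ restricts to $\sigma_n=\tfrac{1}{d}\sum_l\tau_l^*\rho_n$, so the difference on that sheet is $\tfrac{1}{d}\sum_l(\tau_j^*\rho_n-\tau_l^*\rho_n)$. Applying Lemma \ref{l:L2-branches} with $\tau_1=\tau_2=\tau_j$ and $\tau_3=\tau_4=\tau_l$ bounds each term by $2d^{1/2}\|\nu_{\phi_n}\|^{1/2}$, and summing over sheets and over the finite cover yields $\|m_n-\pi_1^*\sigma_n\|=O(\|\nu_{\phi_n}\|^{1/2})\to 0$. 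Combined with the weak convergences $m_n\to m$ and $\pi_1^*\sigma_n\to\pi_1^*\sigma$, this forces $m=\pi_1^*\sigma$, completing the construction.

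The main obstacle I foresee is the bookkeeping near the finite ramification sets $B_1,B_2$, where the local sheet picture degenerates. This is ultimately harmless: these sets are finite and hence carry no mass for the absolutely continuous measures $\rho_n$, while all the weak-convergence assertions above are clean once one works at the level of operators on continuous functions, since the pushforward by a proper finite branched cover, counted with multiplicities, preserves continuity. A secondary point to check is that the operators $\pi_i^*$ and $\pi_i^*(\pi_i)_*$ on positive measures really coincide with the local sheet-by-sheet description on $X\setminus B_i$; this is immediate from the definition using duality with continuous test functions.
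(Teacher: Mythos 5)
Your proposal is correct and follows essentially the same route as the paper: take a maximizing sequence $\phi_n$, pass to a weak limit $\rho$ of $i\phi_n\wedge\overline\phi_n$, set $m=(\pi_2|_\Gamma)^*\rho$ and $\sigma=\tfrac1d(\pi_1|_\Gamma)_*m$, and show $m=(\pi_1|_\Gamma)^*\sigma$ by exploiting $\|\nu_{\phi_n}\|\to 0$ via Lemma~\ref{l:L2-branches}; the paper channels the last step through Lemma~\ref{l:test_mod} (vanishing of $\langle m,v\rangle$ for $v$ with $(\pi_1|_\Gamma)_*v=0$), whereas you prove the equivalent statement by a direct total-variation bound $\|m_n-(\pi_1|_\Gamma)^*\sigma_n\|=O(\|\nu_{\phi_n}\|^{1/2})$, which is the same estimate in different packaging. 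One harmless notational slip: with the paper's conventions $\tfrac1d(\pi_1|_\Gamma)_*(\pi_2|_\Gamma)^*\rho_n$ is $\tfrac1d f^*\rho_n$, not $\tfrac1d f_*\rho_n$; this does not affect your argument since you only ever use the local sheet description $\sigma_n|_U=\tfrac1d\sum_l\tau_l^*\rho_n$.
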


		We will only prove that $\|\frac1{d}f^*\|<1$, the proof of $\|\frac1{d}f_*\|<1$ being analogous. The inequality $\left \|\frac1{d}f^* \right\|\leq 1$ follows from Proposition \ref{prop:CS-ineq}. We will show that if equality holds then $f$ must be weakly modular.
		
		Suppose  that  $\left \|\frac1{d}f^* \right\|=1$. Then there exists a sequence of $(1,0)$-forms $\{\phi_n\}_{n\geq 0}$ such that $\|\phi_n\|_{L^2}=1$ and $ \left \|\frac1df^*(\phi_n) \right \|_{L^2}\to 1$. 
		By compactness, the sequence $\{i\phi_n\wedge \overline \phi_n\}_{n\geq 0}$ of probability measures admits a subsequence, which we still denote by $\{i\phi_n\wedge \overline \phi_n\}_{n\geq 0}$ for simplicity, that converges to a probability measure $m_2$. Define $m := (\pi_{2|\Gamma})^* m_2$.
		
		\begin{lemma} \label{l:test_mod}
		Let $v$ be a continuous function on $\Gamma$ such that $(\pi_{1|\Gamma})_*v=0$. Then 
		$\langle m, v \rangle = 0$.
		\end{lemma}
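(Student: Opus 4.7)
The idea is to show the quantitative statement $\langle M_n, v\rangle = O(\|\nu_{\phi_n}\|^{1/2})$, where
$$M_n := (\pi_{2|\Gamma})^*(i\phi_n\wedge\overline{\phi_n}) \qquad \text{and} \qquad \nu_{\phi_n}:=d f^*(i\phi_n\wedge\overline{\phi_n}) - i f^*\phi_n\wedge\overline{f^*\phi_n},$$
and then pass to the limit. First, I would verify that $M_n \to m$ weakly on $\Gamma$. Using the duality $\langle M_n,v\rangle=\langle i\phi_n\wedge\overline{\phi_n},(\pi_{2|\Gamma})_* v\rangle$ and the fact that $(\pi_{2|\Gamma})_* v$ is continuous on $X$ (the pushforward by a finite branched cover of a continuous function, with multiplicities absorbed), the weak convergence $i\phi_n\wedge\overline{\phi_n}\to m_2$ yields $\langle M_n,v\rangle\to\langle m_2,(\pi_{2|\Gamma})_* v\rangle=\langle m,v\rangle$. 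Hence it suffices to prove $\langle M_n,v\rangle\to 0$.

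The key quantitative input is that $\|\nu_{\phi_n}\|\to 0$. Indeed, $\nu_{\phi_n}$ is the positive measure appearing in the identity from the proof of Proposition \ref{prop:CS-ineq}, and computing masses gives $\|\nu_{\phi_n}\| = d^2 - \|f^*\phi_n\|_{L^2}^2 \to d^2 - d^2 = 0$ by hypothesis. Lemma \ref{l:L2-branches} then tells us that over any simply connected open set $U\subset X\setminus B_1$ with branches $\tau_1,\dots,\tau_d$ of $f$, the $d$ positive measures $i\tau_j^*\phi_n\wedge\overline{\tau_j^*\phi_n}$ all coincide modulo an error of mass $O(\|\nu_{\phi_n}\|^{1/2})$.

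To exploit the hypothesis $(\pi_{1|\Gamma})_* v = 0$, I would choose a finite partition of unity $\{\chi_i\}$ on $X\setminus B_1$ subordinate to a cover by simply connected open sets $U_i$ as above. Since $B_1$ is finite and $M_n$ is a locally $L^1$ $(1,1)$-form on $\Gamma$, the finite set $\pi_1^{-1}(B_1)$ carries no $M_n$-mass, so $\langle M_n,v\rangle=\sum_i\langle M_n,(\chi_i\circ\pi_1)v\rangle$. On each $U_i$, parametrizing the sheets of $\Gamma$ by $x\mapsto(x,\tau_j^{(i)}(x))$ and setting $v_j^{(i)}(x):=v(x,\tau_j^{(i)}(x))$, the pullback formula gives
$$\langle M_n,(\chi_i\circ\pi_1)v\rangle = \sum_{j=1}^d \int_{U_i}\chi_i(x)\, v_j^{(i)}(x)\; i\tau_j^{(i)*}\phi_n\wedge\overline{\tau_j^{(i)*}\phi_n}.$$
The assumption $(\pi_{1|\Gamma})_* v=0$ reads $\sum_j v_j^{(i)}(x)=0$. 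Subtracting the branch average $P_n^{(i)}:=\frac{1}{d}\sum_k i\tau_k^{(i)*}\phi_n\wedge\overline{\tau_k^{(i)*}\phi_n}$ (which contributes zero by the constraint) gives
$$\langle M_n,(\chi_i\circ\pi_1)v\rangle = \sum_{j=1}^d \int_{U_i}\chi_i\, v_j^{(i)}\Big(i\tau_j^{(i)*}\phi_n\wedge\overline{\tau_j^{(i)*}\phi_n}-P_n^{(i)}\Big),$$
and each bracket has mass $O(\|\nu_{\phi_n}\|^{1/2})$ by Lemma \ref{l:L2-branches}. Summing over the finite cover yields $|\langle M_n,v\rangle|\lesssim \|v\|_\infty\,\|\nu_{\phi_n}\|^{1/2}\to 0$, and the lemma follows.

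The main obstacle is conceptual rather than computational: one must recognise that the hypothesis $(\pi_{1|\Gamma})_* v=0$ precisely annihilates the "average across branches" part of $M_n$, so that only the "variation across branches" remains, and this is exactly what $\nu_{\phi_n}$ controls through Lemma \ref{l:L2-branches}. The remaining work is routine bookkeeping with the partition of unity used to localize on $X\setminus B_1$.
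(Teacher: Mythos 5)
Your proof is correct and follows the same overall strategy as the paper's: reduce to the quantitative bound $\langle M_n,v\rangle=O(\|\nu_{\phi_n}\|^{1/2})$, localize away from the ramification, use $(\pi_{1|\Gamma})_*v=0$ to kill the branch-average contribution, and let Lemma \ref{l:L2-branches} absorb the remaining cross-branch variation. You are in fact slightly more careful than the paper on the preliminary step, since you explicitly verify $\langle M_n,v\rangle\to\langle m,v\rangle$ via the duality $\langle M_n,v\rangle=\langle i\phi_n\wedge\overline{\phi_n},(\pi_{2|\Gamma})_*v\rangle$, which the paper leaves implicit.

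The two places where the details diverge are cosmetic. For the localization, you use a finite partition of unity on $X\setminus B_1$ subordinate to simply connected charts; the paper instead fixes a single connected, simply connected $U\subset X\setminus B_1$ with $X\setminus U$ of zero Lebesgue measure (e.g.\ the complement of a piecewise-smooth curve). The paper's device is cleaner because it sidesteps the (true but unjustified in your sketch) claim that $X\setminus B_1$ admits a \emph{finite} cover by simply connected open sets disjoint from $B_1$; one would need to argue, say, by slitting coordinate discs through the punctures. For the algebraic step, you subtract the branch average $P_n^{(i)}$, using that $\sum_j v_j^{(i)}=0$ kills the averaged term and only the ``diagonal'' instances of Lemma \ref{l:L2-branches} (comparing $i\tau_j^*\phi\wedge\overline{\tau_j^*\phi}$ with $i\tau_k^*\phi\wedge\overline{\tau_k^*\phi}$) are needed. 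The paper instead symmetrizes $d^2\sum_j\langle ip_j^*\psi\wedge\overline{p_j^*\psi},p_j^*v\rangle$ to $\sum_{j,l,k}\langle ip_l^*\psi\wedge\overline{p_k^*\psi},p_j^*v\rangle$ up to $O(\|\nu_\phi\|^{1/2})$ and then factors the triple sum as $\langle(\pi_{1|\Gamma})_*\psi\wedge\overline{(\pi_{1|\Gamma})_*\psi},(\pi_{1|\Gamma})_*v\rangle$, which vanishes. Both manipulations land the same estimate; yours is a touch more economical in its use of Lemma \ref{l:L2-branches}, while the paper's produces a single clean identity that makes the role of the hypothesis on $v$ structurally explicit.
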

		
		\begin{proof}
		Without loss of generality, we can assume that $|v|\leq 1$. Define $\psi_n:=(\pi_{2|\Gamma})^*\phi_n$. 
It is enough to show that $\langle i \psi_n \wedge \overline \psi_n, v \rangle \to 0$ as $n$ tends to infinity. 
Since $ \|d^{-1}f^*(\phi_n)\|_{L^2} \to 1$, from the proof of Proposition \ref{prop:CS-ineq}, we see that
$$\nu_n:=d\, f^*(i \phi_n \wedge \overline{\phi}_n) - i f^*\phi_n \wedge \overline{f^*\phi}_n$$ 
is a sequence positive measures tending to zero. 

In order to simplify the notation, consider an arbitrary form $\phi$ in $L^2_{(1,0)}$ and define
$$\psi:=(\pi_{2|\Gamma})^*\phi \qquad \text{and} \qquad \nu_\phi:=d\, f^*(i \phi \wedge \overline{\phi}) - i f^*\phi \wedge \overline{f^*\phi}.$$
It suffices to check that $\langle i \psi \wedge \overline \psi, v \rangle =O(\|\nu_\phi\|^{1/2})$. 

Choose a connected and simply connected open set $U$ in $X\setminus B_1$ such that  $X\setminus U$ has zero Lebesgue measure (we can choose for instance $U$ such  that $X\setminus U$ is a piecewise smooth curve). So $f$ admits $d$ branches $\tau_j:U\to X$, with $1\leq j\leq d$, counting multiplicity. 
Denote by $\Gamma^{(j)}$ the graph of $\tau_j$ in $X\times X$ which is contained in $\Gamma$. Denote also by $p_j:U\to \Gamma^{(j)}$ the inverse of the map $\pi_1$ restricted to $\Gamma^{(j)}$. 
With the above choice of $U$, we can obtain from the proof of Proposition \ref{prop:CS-ineq} that 
$$\nu_\phi=\sum_{1\leq j<l\leq d} i (\tau_j^*\phi-\tau_l^*\phi)\wedge \overline{(\tau_j^*\phi-\tau_l^*\phi)}=\sum_{1\leq j<l\leq d} i (p_j^*\psi-p_l^*\psi)\wedge \overline{(p_j^*\psi-p_l^*\psi)}.$$

According to Lemma \ref{l:L2-branches}, for all indices $j,l,k,m$ we have 
$$\|i\tau_j^*\phi \wedge\overline {\tau_l^*\phi} - i\tau_k^*\phi \wedge\overline {\tau_m^*\phi}\| = O(\|\nu_\phi\|^{1/2})$$
or equivalently
$$\|ip_j^*\psi \wedge\overline {p_l^*\psi} - ip_k^*\psi \wedge\overline {p_m^*\psi}\| = O(\|\nu_\phi\|^{1/2}).$$
We deduce from the last identity and the inequality $|p_j^*v|\leq 1$ that
\begin{eqnarray*}
d^2\langle i \psi \wedge \overline \psi, v\rangle   & = &    d^2 \sum_{1\leq j\leq d} \big\langle ip_j^*\psi\wedge \overline{p_j^*\psi}, p_j^* v \big\rangle \\
& = &    \sum_{1\leq j,l,k\leq d} \big\langle ip_l^*\psi\wedge \overline{p_k^*\psi}, p_j^* v \big\rangle +O(\|\nu_\phi\|^{1/2}) \\
& = & \big\langle (\pi_{1|\Gamma})_*\psi \wedge \overline{(\pi_{1|\Gamma})_*\psi}, (\pi_{1|\Gamma})_*v\big\rangle +O(\|\nu_\phi\|^{1/2}).
\end{eqnarray*}
The first term in the last sum vanishes because by the hypothesis on $v$, we have
$$(\pi_{1|\Gamma})_*v=\sum_{1\leq j\leq d} p_j^*v=0.$$
This ends the proof of the lemma.
		\end{proof}

	\begin{proof}[End of the proof of Proposition \ref{normless1}]
	Recall that we assume by contradiction that $\left \|\frac1{d}f^* \right\|=1$ and we need to show that $f$ is weakly modular. Let $m_2, m$ be as above and define 
$m_1 := d^{-1} (\pi_1|_\Gamma)_*m$. We only need to check that  $(\pi_1|_\Gamma)^* m_1 = m $. 

Let $\chi$ be a continuous test function on $\Gamma$ and define $v := (\pi_1|_\Gamma)^*(\pi_1|_\Gamma)_* \chi  - d \chi$. Notice that the last function satisfies $(\pi_1|_\Gamma)_*v=0$. By Lemma \ref{l:test_mod}, we have  $\langle m,v\rangle=0$. This, together with the definition of $m_1$, imply that
$$\big \langle m, d\chi \big\rangle = \big\langle m , (\pi_1|_\Gamma)^*(\pi_1|_\Gamma)_* \chi \big\rangle = \big\langle (\pi_1|_\Gamma)^*(\pi_1|_\Gamma)_* m ,  \chi \big\rangle = \big \langle d (\pi_1|_\Gamma)^*m_1 ,  \chi \big \rangle.$$
The proposition follows.	
\end{proof}

\subsection*{Construction of $\mu^\pm$}

The existence of the measures $\mu^+$ and $\mu^-$ will follow from the next proposition. Once we have the contracting property of the operators $\frac{1}{d}f^*$ and $\frac{1}{d}f_*$ given by Proposition \ref{normless1}, the  proof is parallel to the $\text{dd}^c$ method used to construct the Green currents of a holomorphic endomorphism of the complex projective space $\P ^k$, see \cite{dinh-sibony:cime}.
		
	\begin{proposition} \label{expcon}
		Let $f$ be a non-weakly modular correspondence on $X$. Then, for any $h\in W^{1,2}$ the sequence $\frac1{d^n}(f^n)_*h$ converges to some constant $c^+_h$ in $W^{1,2}$. The convergence is exponentially fast in the sense that there are constants $A >0$ and $0<\lambda<1$ independent of $h$ such that 
\begin{equation} \label{ch-exp}
\Big \|\frac1{d^n}(f^n)_*h-c^+_h \Big \|_{W^{1,2}} \leq A \|\del h\|_{L^2} \lambda^n \quad \text{for every }  n \geq 0.
\end{equation}
Moreover, we have $|c^+_h|\leq A'\|h\|_{W^{1,2}}$ for some constant $A'>0$ independent of $h$ and an analogous statement holds for the operator $\frac{1}{d} f^*$.
	\end{proposition}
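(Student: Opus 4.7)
My plan is to reduce everything to the contraction $\lambda_0 := \|d^{-1}f_*\|_{L^2_{(1,0)}} < 1$ given by Proposition \ref{normless1}. Set $u_n := d^{-n}(f^n)_* h$ and let $m_n := \int_X u_n\, \omega$. Since $f_*$ commutes with $\partial$ (on smooth forms this is immediate from the local branch description of $f_*$, and then extends by continuity), one has $\partial u_n = d^{-n}(f^n)_* \partial h$, and Proposition \ref{normless1} applied $n$ times gives the exponential decay $\|\partial u_n\|_{L^2} \leq \lambda_0^n \|\partial h\|_{L^2}$. Combining this with the Poincar\'e--Sobolev inequality on the compact Riemann surface $X$ (which in real dimension two embeds $W^{1,2}$ into $L^p$ for every $p < \infty$), one obtains
\[
\|u_n - m_n\|_{L^p} \lesssim_p \lambda_0^n \|\partial h\|_{L^2} \quad \text{for every finite } p.
\]

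The crux is to show that the scalars $m_n$ form a Cauchy sequence at the same exponential rate. Using the duality $\int_X f_* v \cdot \omega = \int_X v \cdot f^* \omega$ and writing $\rho := d^{-1}f^* \omega$ (a probability measure, since $\int_X f^* \omega = d$), I would compute
\begin{equation*}
m_{n+1} - m_n = \int_X u_n (\rho - \omega) = \int_X (u_n - m_n)(\rho - \omega),
\end{equation*}
the second equality holding because $\rho$ and $\omega$ have the same total mass. A local model near a critical value of $f^{-1}$, where $\Gamma$ is parametrised as $z \mapsto (z^k, z)$, shows that $\rho$ has algebraic singularities of the type $|z_1|^{-2(k-1)/k}$, so $\rho \in L^{1+\delta}(X)$ for some $\delta > 0$. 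Choosing the conjugate exponent $p = (1+\delta)/\delta$ above and applying H\"older,
\[
|m_{n+1} - m_n| \leq \|\rho - \omega\|_{L^{1+\delta}} \cdot \|u_n - m_n\|_{L^p} \lesssim \lambda_0^n \|\partial h\|_{L^2}.
\]

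Telescoping then produces $c^+_h := \lim_n m_n$ with $|m_n - c^+_h| \lesssim \lambda_0^n \|\partial h\|_{L^2}/(1-\lambda_0)$. By the definition of $\|\cdot\|_{W^{1,2}}$ used in the paper,
\[
\|u_n - c^+_h\|_{W^{1,2}} = |m_n - c^+_h| + \|\partial u_n\|_{L^2} \lesssim \lambda_0^n \|\partial h\|_{L^2},
\]
and the bound $|c^+_h| \leq |m_0| + \sum_{k \geq 0}|m_{k+1} - m_k| \lesssim \|h\|_{W^{1,2}}$ follows from the same telescoping. The corresponding statement for $d^{-1}f^*$ is obtained by the symmetric argument invoking the other half of Proposition \ref{normless1}. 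The only genuinely non-trivial ingredient is the Cauchy step for $m_n$: since $\rho$ is truly singular at the critical values of $f^{-1}$, one cannot pair it with $u_n - m_n$ via naive $L^2$ duality, so the argument must leverage the $L^{1+\delta}$-integrability of $\rho$ together with the two-dimensional Sobolev embedding. Every other step is bookkeeping once Proposition \ref{normless1} is available.
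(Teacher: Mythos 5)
Your proof is correct, and its skeleton coincides with the paper's: both decompose $d^{-n}(f^n)_*h$ into a mean-zero part, which is killed exponentially by applying the spectral gap of Proposition \ref{normless1} to $\partial h$ together with the Poincar\'e--Sobolev inequality, plus a telescoping sequence of scalars (your $m_{n+1}-m_n$ is exactly the paper's $c_{n+1}$). The only genuine divergence is the mechanism for the Cauchy estimate on those scalars. The paper bounds $|c_{n}| = |\int_X \tfrac1d f_* h_{n-1}\,\omega|$ by $\|\tfrac1d f_* h_{n-1}\|_{L^2}\lesssim\|h_{n-1}\|_{W^{1,2}} = \|\partial h_{n-1}\|_{L^2}$ via Propositions \ref{equinorm} and \ref{boundop}; the proof of Proposition \ref{boundop} sidesteps the singularities of $f^*\omega$ entirely by testing only against $\mathbf 1_V\omega$ for a small $V$ over which $f$ is unramified, where the relevant push-forward has bounded density, and then invoking the equivalence of norms. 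You instead confront the singularity of $\rho = d^{-1}f^*\omega$ head-on: the local model $t\mapsto(t^k,g(t))$ does give density of order $|\zeta|^{-2(k-1)/k}$, hence $\rho\in L^{1+\delta}$ for small $\delta>0$ (uniform, since the ramification orders at the finitely many points of $R_1$ are bounded), and the two-dimensional Poincar\'e--Sobolev embedding into $L^p$ for the conjugate exponent closes the H\"older pairing. Both routes are valid; the paper's is softer and needs only the $L^2$ case of Poincar\'e--Sobolev, while yours is more self-contained at the cost of the explicit singularity computation and the full range of Sobolev exponents. Two small points you should make explicit: the adjunction $\int_X(f_*u)\,\omega=\int_X u\,f^*\omega$ for non-smooth $u\in W^{1,2}$ requires a density argument (both sides are continuous on $W^{1,2}$, by Proposition \ref{boundop} on one side and by your $L^p$--$L^{1+\delta}$ pairing on the other), and the commutation $\partial(f_*u)=f_*(\partial u)$ should be stated in the sense of currents and extended from smooth $u$ by continuity, as the paper implicitly does.
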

	
	\begin{proof}
Let 
$$c_0:=\int_X h \omega \qquad \text{and} \qquad h_0:=h-c_0.$$ 
Then, we define inductively  
$$c_n:=\int_X \Big(\frac1df_*h_{n-1}\Big)\omega \qquad \text{and} \qquad h_n:=\frac1df_*h_{n-1}-c_n.$$ 
It is not difficult to see that 
\begin{equation} \label{eq:push-forward-hn}
\frac{1}{d^n}(f^n)_* h = h_n + c_n + c_{n-1}+ \cdots + c_1+ c_0.
\end{equation}

		By Proposition \ref{boundop}, we have $h_n\in W^{1,2}$ for all $n$. Notice that $\lp \omega, h_n \rp = 0$, so by  Poincar\'e-Sobolev inequality we have $\|h_n\|_{L^2} \leq A_1 \|\partial h_n\|_{L^2}$ for some constant $A_1>0$. We also have $\del h_n = d^{-1} f_* (\del h_{n-1})$ for every $n$. Let $\lambda$ be the norm of $\frac{1}{d}f_*$ acting on $L^2_{(1,0)}$. Then $0<\lambda<1$ according to Proposition \ref{normless1} and we have  $$\|h_n\|_{L^2}\leq A_1 \left \|\partial h_n \right\|_{L^2} = A_1 \Big\|\frac 1{d^n}(f^n)_*(\partial h) \Big\|_{L^2} \leq A_1\lambda^n\|\partial h\|_{L^2}.$$
	
	By Propositions \ref{equinorm}  and \ref{boundop}, there is a constant $A_2>0$ such that $\|\frac1df_* \varphi \|_{L^2}\leq A_2\|\varphi\|_{W^{1,2}}$ for every $\varphi \in W^{1,2}$. Hence, we have
	\begin{align} \label{eq:estimate-cn}
		 		|c_n|& =\Big |\int_X \frac1d (f_*h_{n-1})\omega \Big  | \leq \Big  \|\frac1df_*h_{n-1} \Big  \|_{L^2} \leq   A_2\|h_{n-1}\|_{W^{1,2}} \nonumber \\
		& = A_2\|\partial h_{n-1}\|_{L^2} = A_2 \Big  \|\frac 1{d^{n-1}}(f^{n-1})_*(\partial h) \Big \|_{L^2} 
		\leq A_2 \lambda^{n-1}  \|\partial h\|_{L^2}. \nonumber
	\end{align}

Define  $c^+_h:= \sum^{\infty}_{k=0}c_k$. Clearly, this constant is finite and satisfies the second estimate in the proposition for a suitable constant $A'>0$.  For later use, notice that $c^+_h$ depends continuously on $h$ with respect to the weak topology on $W^{1,2}$. Indeed, when $h$ is bounded in $W^{1,2}$, the above estimates on $c_n$ are uniform on $h$ and clearly, each $c_n$ depends continuously on $h$ with respect to the weak topology on $W^{1,2}$.

For the rest of the proof we have from (\ref{eq:push-forward-hn}) that
		
		\begin{align*}
		\Big  \|\frac1{d^n}(f^n)_*h-c^+_h \Big \|_{L^2}&= \Big  \|h_n-\sum_{k=n+1}^\infty c_k \Big  \|_{L^2} \leq \|h_n\|_{L^2}+\sum_{k=n+1}^\infty |c_k| \\
		&\leq A_1 \lambda^n \|\partial h\|_{L^2}+\sum_{k=n+1}^\infty A_2 \lambda^{k-1} \|\partial h\|_{L^2} 
		\leq A_3 \| \del h\|_{L^2} \lambda^n
		\end{align*}
	for some constant $A_3 > 0$.  On the other hand, by Proposition \ref{equinorm} , we obtain
\begin{align*} 
\Big\|\frac{1}{d^n}(f^n)_*h-c^+_h \Big\|_{W^{1,2}}  & \lesssim \Big\|\frac{1}{d^n}(f^n)_*h-c^+_h \Big\|_{L^2} + \Big\|\frac{1}{d^n}(f^n)_* (\del h) \Big\|_{L^2} \\
& \leq  \Big\|\frac{1}{d^n}(f^n)_*h-c^+_h \Big\|_{L^2} +  \lambda^n \|\partial h\|_{L^2} \\
& \leq A_4 \lambda^n \|\partial h\|_{L^2}
\end{align*}
for some constant $A_4>0$. 
Thus, we get (\ref{ch-exp}) for a suitable constant $A>0$ and the proof of the proposition is now complete.
\end{proof}

	We are now in position to give a proof of our main theorem.
	
	\begin{proof}[End of proof of Theorem \ref{thm:main-theorem}]
Let $\alpha$ be a  smooth $(1,1)$-form on $X$. For simplicity, we can assume that  $c_\alpha:=\int_X \alpha = 1$. If $h$ is a smooth test function, we have 
$$\Big\lp\frac1{d^n}(f^n)^*\alpha,h \Big\rp= \Big\lp\alpha,\frac1{d^n}(f^n)_*h \Big\rp.$$

By Proposition \ref{expcon}, $\frac1{d^n}(f^n)_*h$ converges in $W^{1,2}$ to a constant $c^+_h$. Using  Proposition \ref{equinorm} we have that
$$ \Big| \Big\lp\alpha, \frac1{d^n}(f^n)_*h-c^+_h \Big\rp \Big| \lesssim \Big\| \frac1{d^n}(f^n)_*h-c^+_h \Big \|_{L^1} \lesssim \Big \| \frac1{d^n}(f^n)_*h-c^+_h \Big \|_{W^{1,2}}$$
and the last expression tends to $0$. Therefore, the sequence $\lp\alpha,\frac1{d^n}(f^n)_*h\rp$ converges to $\lp\alpha,c^+_h\rp = c^+_h$ and this limit is independent of $\alpha$. It is clear that $h \mapsto c_h^+$ is linear in $h$. The measure $\mu^+$ defined by $\lp \mu^+, h \rp := c^+_h$ satisfies (\ref{eq:def-mu-}). The relation $f^* \mu^+ = d \mu^+$ is obtained by applying $f^*$ to both sides of (\ref{eq:def-mu-}) and using the continuity of $f^*$.
		The statements about $\mu^-$ are proven analogously. This  completes the proof.
	\end{proof}
	
	\begin{proposition} \label{prop:mu-continuous}
	The functionals $h \mapsto \lp \mu^\pm, h \rp$ acting on smooth $h$ extend to linear functionals on $W^{1,2}$ which are continuous with respect to the weak topology on $W^{1,2}$. 
	\end{proposition}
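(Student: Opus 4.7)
The plan is to observe that Proposition \ref{expcon} already contains essentially all the required analytic input, and that the result then follows from general Banach/Hilbert space formalism.

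First, I would note that Proposition \ref{expcon} actually defines the constant $c_h^\pm$ for every $h\in W^{1,2}$, not only for smooth $h$, together with the a priori bound $|c_h^\pm|\leq A'\|h\|_{W^{1,2}}$. Since by construction in the proof of Theorem \ref{thm:main-theorem} this constant equals $\langle\mu^\pm,h\rangle$ whenever $h$ is smooth, the assignment $h\mapsto c_h^\pm$ is automatically the desired extension of the original functional to all of $W^{1,2}$.

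Next, I would verify linearity by inspecting the inductive construction in the proof of Proposition \ref{expcon}: the scalars $c_0,c_1,\ldots$ and the functions $h_0,h_1,\ldots$ are manifestly linear in the input $h$ (each step being a combination of $f_*$, integration against $\omega$, and subtraction of a linear-in-$h$ constant), so the series $c_h^\pm=\sum_{n\geq 0}c_n$, which converges uniformly on bounded subsets of $W^{1,2}$, is also linear in $h$. Combined with the bound $|c_h^\pm|\leq A'\|h\|_{W^{1,2}}$, this shows that $h\mapsto c_h^\pm$ is a bounded linear functional on $W^{1,2}$.

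Finally, I would conclude by a soft argument: bounded linear functionals on a normed space are precisely those that are continuous with respect to the weak topology, since the weak topology is by definition the coarsest topology making every element of the topological dual continuous. Equivalently, by Proposition \ref{equinorm} the norm $\|\cdot\|_{W^{1,2}}$ is equivalent to the Hilbertian Sobolev norm $(\|h\|_{L^2}^2+\|\partial h\|_{L^2}^2)^{1/2}$, so $W^{1,2}$ is topologically a Hilbert space and every bounded linear functional on it is realized as an inner product, hence tautologically weakly continuous. The case of $\mu^-$ is handled identically using the analogous statement of Proposition \ref{expcon} for $\tfrac{1}{d}f^*$. No real obstacle is anticipated, since all the analytic work is already contained in the construction and estimation of $c_h^\pm$.
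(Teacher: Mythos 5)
Your proposal is correct and follows essentially the same route as the paper: both rest on the construction of $c_h^\pm$ in the proof of Proposition \ref{expcon}, the geometric bound $|c_n|\lesssim \lambda^{n-1}\|\partial h\|_{L^2}$, and the linearity of each $c_n$ in $h$, so that $h\mapsto c_h^\pm$ is the desired extension. The only cosmetic difference is that you obtain weak continuity directly from norm-boundedness of the linear functional (valid by the very definition of the weak topology, so the Riesz-representation detour is unnecessary), whereas the paper phrases it as uniform convergence on bounded sets of the weakly continuous functionals $c_n$.
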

	
	\begin{proof}
	We have seen in the proof of Proposition \ref{expcon} that $c^\pm_h$ depends continuously on $h$ with respect to the weak topology on $W^{1,2}$. So it is enough to define the extension of $h \mapsto \lp \mu^\pm, h \rp$  as the maps $h\mapsto c^\pm_h$.	
	\end{proof}
	
\begin{remark} \rm \label{r:integral}
Roughly speaking, Proposition \ref{prop:mu-continuous} implies that $\mu^\pm$ integrate functions in $W^{1,2}$. However, $h$ is only defined almost everywhere. So in order to give a meaning for these integrals, we need to choose a good representative of $h$ using the notion of Lebesgue points. We will not consider this matter here in order to keep the paper less technical. For $h\in W^{1,2}$ continuous, we can approximate $h$, both uniformly and in $W^{1,2}$ norm, by a sequence of smooth functions. Therefore, by continuity, $\lp \mu^\pm,h\rp$ coincides with the usual integral of $h$ with respect to $\mu^\pm$. 
\end{remark}

\begin{remark} \rm
The proof of Theorem \ref{thm:main-theorem} shows that if $\nu_n$ is a sequence of probability measures such that $|\lp \nu_n, h \rp|\leq c \|h\|_{W^{1,2}}$ for 
some constant $c>0$ and for smooth $h$ in
 $W^{1,2}$, then $d^{-n} (f^n)^* \nu_n \to \mu^+$ and $d^{-n} (f^n)_* \nu_n \to \mu^-$ exponentially fast as $n$ tends to infinity. The simple convergences require weaker estimates on $|\lp \nu_n, h \rp|$. 
\end{remark}	
	
	\begin{corollary}
		
	The measures $\mu^+$ and $\mu^-$ have no mass on polar sets.
	
	\end{corollary}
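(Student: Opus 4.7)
The plan is to combine Proposition \ref{prop:mu-continuous} with a capacity-type construction. Since the functional $h\mapsto \lp\mu^\pm,h\rp$ is linear and continuous on $W^{1,2}$ in the weak topology, it is automatically bounded in norm, so there exists a constant $C>0$ (also visible from the proof of Proposition \ref{expcon}) such that $|\lp\mu^\pm,h\rp|\leq C\|h\|_{W^{1,2}}$ for every $h\in W^{1,2}$. It therefore suffices to show that every polar set $E\subset X$ has zero $W^{1,2}$-capacity, in the sense that for each $\varepsilon>0$ one can find $h\in W^{1,2}$ with $0\leq h$, $h\geq 1$ on $E$, and $\|h\|_{W^{1,2}}<\varepsilon$.

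To produce such an $h$, I would start from a global $\omega$-subharmonic function $u:X\to[-\infty,0]$, i.e., $dd^c u+\omega\geq 0$, with $E\subset\{u=-\infty\}$ and $\int_X u\,\omega>-\infty$; such a $u$ is obtained by gluing local potentials defining $E$ via a partition of unity and absorbing the resulting smooth correction of $dd^c u$ by rescaling $\omega$. For $M>0$, the truncation $u_M:=\max(u,-M)$ is bounded $\omega$-subharmonic, and in particular lies in $W^{1,2}$. Integration by parts on the compact surface $X$, together with $\int_X(dd^c u_M+\omega)=\int_X\omega=1$ and $u_M\in[-M,0]$, yields the energy estimate
$$\int_X du_M\wedge d^c u_M \;=\; -\int_X u_M\, dd^c u_M \;=\; -\int_X u_M\,(dd^c u_M+\omega)+\int_X u_M\,\omega \;\leq\; M+\Big|\int_X u\,\omega\Big|,$$
so $\|\partial u_M\|_{L^2}\lesssim \sqrt{M}$. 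Then $h_M:=-u_M/M\in[0,1]$ satisfies $h_M=1$ on $\{u\leq -M\}\supset E$ and $\|h_M\|_{W^{1,2}}=O(M^{-1/2})$. Granting the identification of the $W^{1,2}$-pairing with the usual integral for $h_M$, we conclude
$$\mu^+(E)\;\leq\;\int_X h_M\,d\mu^+\;=\;\lp\mu^+,h_M\rp\;\leq\;C\|h_M\|_{W^{1,2}}\;=\;O(M^{-1/2}),$$
and letting $M\to\infty$ gives $\mu^+(E)=0$; the argument for $\mu^-$ is identical.

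The main technical obstacle is this last identification, since $u_M$ (and hence $h_M$) is only upper semicontinuous in general, so Remark \ref{r:integral} does not apply directly. I would handle this via a regularization step: on the compact K\"ahler manifold $X$, Demailly's regularization theorem produces a decreasing sequence of smooth $\omega_\varepsilon$-psh functions $u^{(\varepsilon)}\searrow u$ with $\omega_\varepsilon = (1+O(\varepsilon))\omega$. Running the above construction with $u^{(\varepsilon)}$ yields smooth test functions $h_M^{(\varepsilon)}$ for which the pairing-integral identity is classical, and the identity for $h_M$ follows by passing to the limit $\varepsilon\to 0$, using monotone convergence on the integral side and the weak $W^{1,2}$-continuity from Proposition \ref{prop:mu-continuous} on the pairing side.
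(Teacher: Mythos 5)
Your argument is correct and takes a genuinely different route from the paper's. You establish that polar sets have zero $W^{1,2}$-capacity via the truncated potentials $u_M:=\max(u,-M)$ and the classical energy bound $\int_X du_M\wedge d^c u_M\leq M$ (from integration by parts against $dd^cu_M+\omega\geq 0$ of mass $1$), so that $h_M:=-u_M/M$ equals $1$ on $E$ while $\|h_M\|_{W^{1,2}}=O(M^{-1/2})$. The paper instead invokes the fact, cited from \cite{dinh-sibony:decay-correlations} and \cite{vigny:dirichlet}, that the single function $h:=-\log(-u)$ already lies in $W^{1,2}$; since $\langle\mu^+,h\rangle$ is finite and $h=-\infty$ on $E$, the conclusion $\mu^+(E)=0$ is immediate. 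Both arguments ultimately rest on the same integration-by-parts energy estimate---yours unweighted, with a family of bounded cutoffs; theirs logarithmically weighted, producing one unbounded $W^{1,2}$ potential.

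Where the approaches differ most is in identifying the extended pairing $\langle\mu^+,\cdot\rangle$ from Proposition~\ref{prop:mu-continuous} with the Lebesgue integral. In the paper, $h=-\log(-u)$ is the \emph{decreasing} limit of the smooth $h_n=-\log(-u_n)$, bounded in $W^{1,2}$, so monotone convergence plus Remark~\ref{r:integral} resolves the matter in one step. Your $h_M$ is lower semicontinuous (not upper, as you wrote, since $u_M$ is u.s.c.\ and you negate it), so you need an \emph{increasing} approximation, which is why you reach for a regularization $u^{(\varepsilon)}\searrow u$; note that $h_M^{(\varepsilon)}=-\max(u^{(\varepsilon)},-M)/M$ is only Lipschitz rather than smooth as you state, but this is harmless since Remark~\ref{r:integral} applies to continuous $W^{1,2}$ functions, and $h_M^{(\varepsilon)}\nearrow h_M$ with uniform $W^{1,2}$ bounds gives both the monotone convergence of the integrals and the weak $W^{1,2}$ convergence needed on the pairing side. (On a Riemann surface, Demailly's regularization theorem is also heavier than necessary: local convolution of the subharmonic part of $u$ suffices.) In short, your proof is correct and self-contained in a way the paper's is not, but it is somewhat more technical than the argument that becomes available once one grants the $-\log(-u)\in W^{1,2}$ device.
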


    \begin{proof}
Let $E$ be a polar set. By definition, there is a quasi-subharmonic function $u$ on $X$ such that $E\subseteq \{u=-\infty\}$. We may assume that $u \leq -1$ and $u$ is the limit of a decreasing sequence of negative smooth functions $u_n$ with $\ddc u_n\geq -\omega$. One can show that $h := - \log (-u)$ belongs to $W^{1,2}$ and is the decreasing limit of the sequence $h_n := - \log (-u_n)$ which is bounded in $W^{1,2}$, see \cite{dinh-sibony:decay-correlations} and  \cite[Example 1]{vigny:dirichlet}. 

Since $h$ is bounded from above and everywhere defined, the integral $\lp \mu^+,h\rp$ is well-defined and we have 
$$\lp \mu^+,h\rp = \lim_{n\to\infty} \lp \mu^+,h_n\rp.$$
Therefore, $\lp \mu^+,h\rp$ coincides with the value of $\mu^+$ at $h$ defined in Proposition \ref{prop:mu-continuous}, which is a finite number.
Since $h=-\infty$ on $E$, we deduce that 
 $\mu^+(E) = 0$. Analogously, we obtain that $\mu^-(E) = 0$.
   \end{proof}
 
 \subsection*{Mixing}  
 The following proposition is the equivalent in our setting of the exponential mixing for maps, see \cite{dinh-sibony:cime}.
 
 \begin{proposition} \label{prop:mixing}
 Let $f$ and $\mu^\pm$ be as in Theorem \ref{thm:main-theorem}. Then there are constants  $B > 0$ and $0< \lambda <1$ such that
 \begin{equation} \label{eq:mixing}
  \big| \lp \mu^+, d^{-n} (f^n)_* (\varphi) \cdot \psi  \rp - \lp  \mu^+, \varphi \rp \lp \mu^+,\psi \rp \big| \leq B \lambda^n \| \varphi\|_{W^{1,2}} \|\psi\|_{\mathcal C^0} 
 \end{equation}
  for every $n\geq 1$ and for $\varphi \in W^{1,2}$ and $\psi \in \mathcal C^0$. Moreover, for $0<\beta<1$  there is a constant $C = C(\beta)$ such that
   \begin{equation} \label{eq:mixing2}
  \big| \lp \mu^+, d^{-n} (f^n)_* (\varphi) \cdot \psi  \rp - \lp  \mu^+, \varphi \rp \lp \mu^+,\psi \rp \big| \leq C \lambda^{\beta n} \| \varphi\|_{\mathcal C^\beta} \|\psi\|_{\mathcal C^0}
 \end{equation} 
  for $\varphi \in \mathcal C^\beta$ and $\psi \in \mathcal C^0$. An analogous statement holds for $\mu^-$.
 \end{proposition}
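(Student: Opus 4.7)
I would deduce both estimates from Proposition \ref{expcon} combined with the continuity of $\mu^+$ as a functional on $W^{1,2}$ from Proposition \ref{prop:mu-continuous}. Setting $c^+_\varphi := \langle \mu^+, \varphi \rangle$ and $g_n := d^{-n}(f^n)_*\varphi - c^+_\varphi$, the invariance $f^*\mu^+ = d\mu^+$ (iterated $n$ times) gives, by the duality $\langle f^*\mu^+, \psi\rangle = \langle \mu^+, f_*\psi\rangle$, the identity $\langle \mu^+, d^{-n}(f^n)_*\psi\rangle = \langle \mu^+, \psi\rangle$ for every test function $\psi$. Thus the LHS of (\ref{eq:mixing}) reduces to $|\langle \mu^+, g_n \cdot \psi\rangle|$, and Proposition \ref{expcon} already provides $\|g_n\|_{W^{1,2}} \leq A \lambda^n \|\varphi\|_{W^{1,2}}$ with $0<\lambda<1$. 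It therefore remains to bound $|\langle \mu^+, g_n \psi\rangle| \lesssim \|g_n\|_{W^{1,2}} \|\psi\|_{\mathcal C^0}$.

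The second step treats smooth $\varphi$, for which $g_n$ is continuous. Indeed the push-forward of a continuous function by a correspondence is continuous because the preimages of $y$ vary continuously as a multiset, so the sum $\sum_i \varphi(x_i(y))$ varies continuously. Positivity of $\mu^+$ then gives $|\langle \mu^+, g_n \psi\rangle| \leq \|\psi\|_{\mathcal C^0} \int_X |g_n|\,d\mu^+$. The function $|g_n|$ is continuous and lies in $W^{1,2}$ with $\||g_n|\|_{W^{1,2}} \lesssim \|g_n\|_{W^{1,2}}$ (since $\|\partial |g_n|\|_{L^2} = \|\partial g_n\|_{L^2}$ for real-valued $g_n$, by Proposition \ref{equinorm}). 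Applying Remark \ref{r:integral} and the boundedness of $\mu^+$ on $W^{1,2}$ from Proposition \ref{prop:mu-continuous},
$$\int_X |g_n|\,d\mu^+ = \langle \mu^+, |g_n|\rangle \lesssim \||g_n|\|_{W^{1,2}} \lesssim \lambda^n \|\varphi\|_{W^{1,2}},$$
which proves (\ref{eq:mixing}) for smooth $\varphi$. The extension to $\varphi \in W^{1,2}$ follows from density of smooth functions and the continuity of both sides of (\ref{eq:mixing}) in $\varphi \in W^{1,2}$ for fixed $n$, via Propositions \ref{boundop} and \ref{prop:mu-continuous}.

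For the Hölder estimate (\ref{eq:mixing2}) I would use a standard interpolation. Given $\varphi \in \mathcal C^\beta$ and $\epsilon > 0$, regularize $\varphi$ by convolution with a mollifier of width $\epsilon$ in local charts, glued with a partition of unity, to obtain a smooth $\varphi_\epsilon$ with $\|\varphi - \varphi_\epsilon\|_{\mathcal C^0} \lesssim \epsilon^\beta \|\varphi\|_{\mathcal C^\beta}$ and $\|\varphi_\epsilon\|_{W^{1,2}} \lesssim \epsilon^{\beta-1}\|\varphi\|_{\mathcal C^\beta}$. Decompose $\varphi = \varphi_\epsilon + (\varphi - \varphi_\epsilon)$; apply (\ref{eq:mixing}) to $\varphi_\epsilon$ and use the trivial bound $\|d^{-n}(f^n)_*(\varphi - \varphi_\epsilon)\|_{\mathcal C^0} \leq \|\varphi - \varphi_\epsilon\|_{\mathcal C^0}$ together with the analogous estimate on $c^+_{\varphi - \varphi_\epsilon}$ for the remainder. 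Choosing $\epsilon := \lambda^n$ balances the two contributions and produces the rate $\lambda^{\beta n}$. The statements for $\mu^-$ follow by applying the same arguments to $f^{-1}$. The main technical point will be giving a precise meaning to $\langle \mu^+, g_n \psi\rangle$ when $\varphi$ is only in $W^{1,2}$ and hence $g_n \psi$ is defined merely almost everywhere; this is sidestepped by settling the smooth case first and then passing to the limit via density.
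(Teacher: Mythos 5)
Your treatment of \eqref{eq:mixing} is essentially the paper's argument: writing $g_n := d^{-n}(f^n)_*\varphi - \lp\mu^+,\varphi\rp$, the left-hand side collapses algebraically to $\lp\mu^+, g_n\psi\rp$, which one bounds by $\|\psi\|_{\mathcal C^0}\lp\mu^+,|g_n|\rp$ and then by $\lambda^n\|\varphi\|_{W^{1,2}}\|\psi\|_{\mathcal C^0}$ using Proposition \ref{expcon} and the continuity of $\mu^+$ on $W^{1,2}$. Two small remarks. First, the invariance identity $\lp\mu^+, d^{-n}(f^n)_*\chi\rp = \lp\mu^+,\chi\rp$ that you invoke is true but plays no role in the reduction, which is purely an algebraic expansion of the product $d^{-n}(f^n)_*\varphi\cdot\psi$; you can delete that step. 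Second, the estimate $\||g_n|\|_{W^{1,2}}\lesssim\|g_n\|_{W^{1,2}}$ does not follow from Proposition \ref{equinorm}: the pointwise inequality $|\partial|g_n||\leq|\partial g_n|$ a.e.\ is the chain-rule lemma for real-valued Sobolev functions, and the paper cites Proposition 4.1 of \cite{dinh-sibony:decay-correlations} for it; Proposition \ref{equinorm} only handles the zeroth-order term $\int_X|g_n|\omega$. Your derivative claim is correct as a fact, but the attribution should be fixed.

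For \eqref{eq:mixing2} you carry out the interpolation by hand: mollify $\varphi$ at scale $\epsilon$, use \eqref{eq:mixing} on the smooth part and the uniform bound $|I_n(\chi,\psi)|\leq 2\|\chi\|_{\mathcal C^0}\|\psi\|_{\mathcal C^0}$ (valid because $d^{-n}(f^n)_*$ is a contraction on $\mathcal C^0$) on the remainder, then balance $\lambda^n\epsilon^{\beta-1}$ against $\epsilon^\beta$ at $\epsilon=\lambda^n$ to get $\lambda^{\beta n}$. This is a genuinely different route from the paper, which records the two endpoint bounds on $\mathcal C^0$ and $\mathcal C^1$ and quotes abstract interpolation from Triebel, Section 2.7.2. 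Both are standard and correct: yours is self-contained but requires verifying the mollification estimates on a compact Riemann surface (gluing via partitions of unity, as you note); the paper's is shorter at the cost of an external reference. The treatment of $\mu^-$ by passing to $f^{-1}$ and the density argument for non-smooth $\varphi$ are handled the same way in both.
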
 
 
 \begin{proof}
Since smooth functions are dense in $W^{1,2}$ we may assume that $\varphi$ is smooth. If $\varphi$ is constant then the left hand side of (\ref{eq:mixing})  is identically zero, so the inequality trivially holds. Therefore, by adding a constant to $\varphi$, we may assume that $\varphi$ is non-constant and $\lp \mu^+, \varphi  \rp = 0$. Multiplying $\varphi$ by a constant allows us to assume that $\| \varphi\|_{W^{1,2}} =1$. In particular, $\|\del \varphi\|_{L^2} \leq 1$. Let $\varphi_n := d^{-n} (f^n)_* (\varphi)$. Since we are assuming that $c^+_\varphi = \lp \mu^+, \varphi  \rp = 0$ we have, by Proposition \ref{expcon}, that $\|\varphi_n\| _{W^{1,2}}\leq A \lambda^n$ for some constants $A>0$ and $0<\lambda<1$. Observe that $\|\, |\varphi_n|\, \| _{W^{1,2}} \lesssim \|\varphi_n\| _{W^{1,2}}$ by \cite[Prop. 4.1]{dinh-sibony:decay-correlations}. As $\mu^+$ acts continuously on $W^{1,2}$ we have that $\lp \mu^+, |\varphi_n| \rp \leq B \lambda^n$ for some constant $B > 0$.

Notice that, since  $\varphi_n$ and $\psi$ are continuous their pairings with $\mu^+$ coincide with the integral against $\mu^+$, see Remark \ref{r:integral}. We have then 
\begin{align*}
| \lp \mu^+, d^{-n} (f^n)_* (\varphi) \cdot \psi  \rp |  &= \left| \int_X \varphi_n \cdot \psi \, \diff \mu^+ \right| \leq \|\psi\|_{\mathcal C^0} \int_X |\varphi_n| \, \diff \mu^+ \\ &\leq B \|\psi\|_{\mathcal C^0} \lambda^n = B \|\psi\|_{\mathcal C^0} \|\varphi\|_{W^{1,2}} \lambda^n,
\end{align*}
which gives (\ref{eq:mixing}) because $ \lp \mu^+, \varphi  \rp = 0$.

For the second inequality we will use the interpolation theory between the Banach spaces $\mathcal C^0$ and $\mathcal C^1$, see \cite[Section 2.7.2]{triebel}. More precisely, set  $$I_n(\varphi,\psi):= \lp \mu^+, d^{-n} (f^n)_* (\varphi) \cdot \psi  \rp - \lp  \mu^+, \varphi \rp \lp \mu^+,\psi \rp .$$ Observe that $| I_n(\varphi,\psi) | $ coincides with the left hand side of (\ref{eq:mixing2}). Notice that $\| \varphi_n \|_{\mathcal C^0} \leq \| \varphi \|_{\mathcal C^0}$ for $\varphi \in \mathcal C^0$. This gives $| I_n(\varphi,\psi) | \leq 2\| \varphi \|_{\mathcal C^0} \| \psi \|_{\mathcal C^0} $, so the norm of the linear functional  $I_n(\cdot,\psi) $ acting on $\mathcal C^0$ is bounded by $ 2\| \psi \|_{\mathcal C^0}$. On the other hand, inequality (\ref{eq:mixing}) and the fact that $\|\cdot \|_{W^{1,2}} \lesssim \|\cdot \|_{\mathcal C^1}$ gives that the norm of $I_n(\cdot,\psi) $ acting on $\mathcal C^1 \subset W^{1,2}$ is bounded by $B' \lambda^n \| \psi \|_{\mathcal C^0} $ for some constant $B'$. By interpolation we get
\begin{equation} \label{eq:interpolation1}
|I_n(\varphi,\psi)| \leq C_1 \lambda^{\beta n} \| \varphi \|_{\mathcal C^\beta} \| \psi \|_{\mathcal C^0},
\end{equation}
 for $\varphi \in \mathcal C^\beta$ and $\psi
 \in \mathcal C^1$, where $C_1$ is a constant depending on $\beta$. This gives (\ref{eq:mixing2}) and completes the proof for $\mu^+$. The result for $\mu^-$ is proven in the same way.
\end{proof}  

\subsection*{Periodic points}
Let $\Gamma_{f^n}$ denote the graph of $f^n$ in $X\times X$. It may contain some copies of the diagonal but we see from the proposition below that the multiplicity of the diagonal in $\Gamma_{f^n}$ is negligible with respect to the degree $d^n$ of $f^n$. 

 \begin{proposition} \label{p:graph}
  Let $f$ and $\mu^\pm$ be as in Theorem \ref{thm:main-theorem}. Then the sequence of positive closed $(1,1)$-currents $d^{-n}[\Gamma_{f^n}]$ on $X \times X$ converges to the current $\pi_1^*(\mu^+)+ \pi_2^*(\mu^-)$ as $n$ tends to infinity. 
 \end{proposition}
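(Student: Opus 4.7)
The plan is to identify the limit current by testing $d^{-n}[\Gamma_{f^n}]$ against smooth product forms $\pi_1^*\alpha\wedge\pi_2^*\beta$ on $X\times X$ and splitting into cases by the bidegrees of $\alpha$ and $\beta$.

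First I would observe that the sequence $d^{-n}[\Gamma_{f^n}]$ has uniformly bounded mass, since both projections $\pi_j:\Gamma_{f^n}\to X$ have degree $d^n$. Moreover, finite linear combinations of products $\pi_1^*\alpha\wedge\pi_2^*\beta$, with $\alpha,\beta$ smooth forms on $X$, are dense in the space of smooth $(1,1)$-forms on $X\times X$ (standard approximation of $C^\infty(X\times X)$ by separable sums $\sum_j f_j(x)g_j(y)$ applied within each bidegree component). Combined, these observations reduce the problem to proving
\[
\frac{1}{d^n}\lp[\Gamma_{f^n}],\pi_1^*\alpha\wedge\pi_2^*\beta\rp \longrightarrow \lp\pi_1^*\mu^++\pi_2^*\mu^-,\pi_1^*\alpha\wedge\pi_2^*\beta\rp
\]
for all smooth $\alpha,\beta$ on $X$ with total bidegree $(1,1)$.

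I would then split into four bidegree cases. When $\alpha$ has bidegree $(0,0)$ (a function) and $\beta$ has bidegree $(1,1)$, parametrizing the graph integral via $\pi_2$ (which is $d^n$-to-one on $\Gamma_{f^n}$) yields $\int_{\Gamma_{f^n}}\pi_1^*\alpha\wedge\pi_2^*\beta=\int_X\beta\cdot(f^n)_*\alpha$. Applying Proposition \ref{expcon} to the function $\alpha\in W^{1,2}$ gives $d^{-n}(f^n)_*\alpha\to c^+_\alpha=\lp\mu^+,\alpha\rp$, so the rescaled pairing tends to $\lp\mu^+,\alpha\rp c_\beta$. A direct computation shows this equals $\lp\pi_1^*\mu^+,\pi_1^*\alpha\wedge\pi_2^*\beta\rp$, while $\pi_2^*\mu^-$ contributes zero (since $\mu^-\wedge\beta$ would be a $(2,2)$-form on the Riemann surface $X$, hence vanishes). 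The symmetric case $(\deg\alpha,\deg\beta)=((1,1),(0,0))$ is handled analogously using $d^{-n}(f^n)^*\beta\to\lp\mu^-,\beta\rp$, and matches the $\pi_2^*\mu^-$ summand.

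The main obstacle lies in the two mixed-bidegree cases, $(\deg\alpha,\deg\beta)\in\{((1,0),(0,1)),((0,1),(1,0))\}$. Here both target summands $\lp\pi_j^*\mu^\pm,\pi_1^*\alpha\wedge\pi_2^*\beta\rp$ vanish for the same bidegree reason (they would produce forms of bidegree $(2,1)$ or $(1,2)$ on $X$, which are zero), so I must show $d^{-n}\bigl|\int_X\alpha\wedge(f^n)^*\beta\bigr|\to 0$. This is precisely where the non-weak-modularity hypothesis becomes essential and where Theorem \ref{thm:main-theorem} by itself is insufficient: I would invoke Proposition \ref{normless1} together with its counterpart on $L^2_{(0,1)}$ (obtained via Proposition \ref{prop:CS-ineq}) to get $\|(f^n)^*\beta\|_{L^2}\leq(d\lambda)^n\|\beta\|_{L^2}$ for some $\lambda<1$. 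Cauchy--Schwarz then yields
\[
\frac{1}{d^n}\Big|\int_X\alpha\wedge(f^n)^*\beta\Big|\leq\lambda^n\|\alpha\|_{L^2}\|\beta\|_{L^2}\longrightarrow 0,
\]
and assembling the four cases completes the identification of the limit.
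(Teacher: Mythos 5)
Your proof is correct and follows essentially the same route as the paper: reduce to product test forms $\pi_1^*\alpha\wedge\pi_2^*\beta$, handle the $((0,0),(1,1))$ and $((1,1),(0,0))$ cases via the equidistribution statement (you invoke Proposition \ref{expcon} for $(f^n)_*\alpha$ while the paper invokes Theorem \ref{thm:main-theorem} for $(f^n)^*\psi$, which are dual), and kill the two mixed-bidegree cases by Cauchy--Schwarz together with the $L^2$ contraction of Proposition \ref{normless1}. The only cosmetic difference is which projection you push forward along in the first case; the substance is identical.
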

 
\begin{proof}
We begin by noticing that $$\lp [\Gamma_{f^n}], \Phi \rp = \lp [X], (\pi_i|_{\Gamma_{f^n}})_* \Phi \rp = \int_X (\pi_i|_{\Gamma_{f^n}})_* \Phi, \quad i=1,2,$$ for any smooth $(1,1)$-form $\Phi$ on $X \times X$. This can be seen by considering the branches and inverse branches of $f^n$ over connected and simply connected open sets $U_i$ in $X\setminus B_i$, $i=1,2$ such that  $X\setminus U_i$ has zero Lebesgue measure.

Consider a smooth $(1,1)$-form of the type $\Phi = \pi_1^* \varphi \wedge \pi_2^* \psi$ where $\varphi$ is a smooth $(p,q)$-form and $\psi$ is a smooth $(1-p,1-q)$-form with $p,q=0$ or $1$. These forms generate a dense subspace of the space test forms of bidegree $(1,1)$ on $X \times X$ so it is enough to show that for such $\Phi$ we have
\begin{equation} \label{eq:conv-gamma_n}
\lp d^{-n} [\Gamma_{f^n}], \Phi \rp \longrightarrow \lp \pi_1^*(\mu^+)+ \pi_2^*(\mu^-), \Phi \rp \quad \text{as} \quad n \to \infty.
\end{equation}

\textit{Case 1:} $p=q=0$. In this case $\varphi$ is a smooth function and $\psi$ is a $(1,1)$-form. Let $c_{\psi} := \int_X \psi$. Notice that $(\pi_1)_* \Phi =  c_\psi \cdot \varphi$ and by degree reasons $\lp\pi_2^* (\mu^-), \Phi \rp = 0$ . Also $(\pi_1|_{\Gamma_{f^n}})_*(\Phi) = (\pi_1|_{\Gamma_{f^n}})_*(\pi_1^* \varphi \wedge \pi_2^* \psi) = \varphi \cdot (f^n)^* \psi$, so $$ \lp d^{-n}[\Gamma_{f^n}], \Phi \rp = d^{-n} \int_X  \varphi \cdot (f^n)^* \psi = \lp d^{-n} (f^n)^* \psi, \varphi \rp \to c_\psi \lp \mu^+, \varphi \rp = \lp \pi_1^*(\mu^+)+ \pi_2^*(\mu^-), \Phi \rp,$$
since $d^{-n} (f^n)^* \psi \to c_\psi \mu^+$ by Theorem \ref{thm:main-theorem}. Hence, we have (\ref{eq:conv-gamma_n}).\\

\textit{Case 2:} $p=0$ and $q=1$. Now $\varphi$ is a $(0,1)$-form and $\psi$ a $(1,0)$-form. We have $$ \left| \lp d^{-n}[\Gamma_{f^n}], \Phi \rp \right|= d^{-n} \left| \int_X  \varphi \wedge (f^n)^* \psi \right| \leq \|\varphi\|_{L^2} \|d^{-n} (f^n)^* \psi\|_{L^2} \to 0,$$
by Cauchy-Schwarz and Proposition \ref{normless1}. By degree reasons we have $\lp \pi_1^*(\mu^+)+ \pi_2^*(\mu^-), \Phi \rp = 0$ so we get (\ref{eq:conv-gamma_n}) in this case.\\

The cases $p=q=1$ and $p=1$, $q=0$ are treated similarly. This completes the proof of the proposition. 
\end{proof}

Periodic points of order $n$ of $f$ are given by the intersection $\Gamma_{f^n}\cap\Delta$. We say that a periodic point is {\it isolated} if it is given by the intersection between $\Delta$ and a component of $\Gamma_{f^n}$ which is different from $\Delta$. Isolated periodic points are counted with multiplicity. The above result suggests that there are about $d^n+o(d^n)$ repelling (resp. attracting) isolated periodic points which are equidistributed with respect to $\mu^+$ (resp. $\mu^-$). Indeed, $\pi_1^*(\mu^+)$ (resp.\ $\pi_2^*(\mu^-)$) can be imagined as limits of graphs with large (resp.\ small) slopes and its intersection with $\Delta$ can be identified with $\mu^+$ (resp.\ $\mu^-$).

\section{Equidistribution of pre-images}

This section is devoted to the proof of Theorem \ref{thm:equid-no-periodic-crit}. We will only prove (\ref{eq:equid-no-periodic-crit}) since the proof of (\ref{eq:equid-no-periodic-crit2}) is completely analogous. Instead of working with $\frac{1}{d}f^*$ acting on measures, by duality, we will work with the operator $\Lambda := \frac{1}{d}f_*$ acting on the space $W^{1,2}$. By the theory of interpolation of Banach spaces, if we prove (\ref{eq:equid-no-periodic-crit}) for $\beta = 1$, the estimate for $0< \beta \leq 1$ will follow, see \cite{triebel} for details. So we can assume that $\beta=1$ and $\varphi \in \mathcal C^1$. 

After normalizing  $\varphi$ to make it satisfy $\langle \mu^+, \varphi \rangle = 0$ and $\|\varphi\|_{\mathcal C^1} \leq 1$, we need to show that $\Lambda^n \varphi$ converges to 0 exponentially fast as $n$ tends to infinity. This will be achieved by combining an exponential integral estimate for functions in $W^{1,2}$ and a Lojasiewicz's inequality.

The following estimate follows from the classical Moser-Trudinger theorem, see e.g. \cite{moser:trudinger}. It replaces the usual exponential estimate given by Skoda's theorem \cite{skoda} which has been used earlier in some equidistribution problems for maps, where quasi-p.s.h. functions are considered as test functions, see \cite{dinh-sibony:cime}.

\begin{proposition} \label{prop:exponential-estimate}
Let $\mathcal F$ be a bounded family in $W^{1,2}$. Then there are constants $A_{\mathcal F}>0$ and $\theta_{\mathcal F} > 0$, depending on ${\mathcal F}$, such that 
$$\int_X e^{\theta_{\mathcal F} \, \varphi^2} \omega  \leq A_{\mathcal F} \quad \text{for every} \quad \varphi \in \mathcal F.$$
\end{proposition}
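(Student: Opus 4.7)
The plan is to reduce the statement to the classical Moser--Trudinger inequality on the compact Riemann surface $X$, which asserts that there exist absolute constants $\alpha_0 > 0$ and $C_0 > 0$ (depending only on $X$ and $\omega$) such that
\[
\int_X e^{\alpha_0 u^2} \omega \leq C_0
\]
for every $u \in W^{1,2}$ with $\int_X u\, \omega = 0$ and $\|\partial u\|_{L^2} \leq 1$.

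Assume $\mathcal F$ is bounded in $W^{1,2}$, and let $M := \sup_{\varphi \in \mathcal F} \|\varphi\|_{W^{1,2}}$. For any $\varphi \in \mathcal F$, write $\varphi = \varphi_0 + m$ where $m := \int_X \varphi\, \omega$ and $\varphi_0 := \varphi - m$, so that $\int_X \varphi_0 \,\omega = 0$. By the definition of the $W^{1,2}$ norm in the paper, both $|m| \leq M$ and $\|\partial \varphi_0\|_{L^2} = \|\partial \varphi\|_{L^2} \leq M$. If $\|\partial \varphi_0\|_{L^2} = 0$, then $\varphi_0 \equiv 0$ (since it has zero mean), so $\varphi$ is a constant bounded by $M$ and the estimate is trivial. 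Otherwise, apply the Moser--Trudinger inequality to the normalized function $u := \varphi_0 / \|\partial \varphi_0\|_{L^2}$, which satisfies $\int_X u\,\omega = 0$ and $\|\partial u\|_{L^2} = 1$. This gives
\[
\int_X \exp\!\Bigl(\alpha_0 \, \varphi_0^2 / \|\partial \varphi_0\|_{L^2}^2 \Bigr)\, \omega \leq C_0.
\]

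To turn this into the desired bound on $\int_X e^{\theta_{\mathcal F} \varphi^2}\, \omega$, use the elementary estimate $\varphi^2 = (\varphi_0 + m)^2 \leq 2 \varphi_0^2 + 2 m^2$. Setting $\theta_{\mathcal F} := \alpha_0 / (2 M^2)$, we obtain
\[
\theta_{\mathcal F} \varphi^2 \leq 2 \theta_{\mathcal F} \varphi_0^2 + 2 \theta_{\mathcal F} m^2 \leq \alpha_0 \, \varphi_0^2 / \|\partial \varphi_0\|_{L^2}^2 + 2 \theta_{\mathcal F} M^2,
\]
where we used $\|\partial \varphi_0\|_{L^2}^2 \leq M^2$ in the first term and $|m| \leq M$ in the second. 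Exponentiating and integrating yields
\[
\int_X e^{\theta_{\mathcal F} \varphi^2}\, \omega \leq e^{2\theta_{\mathcal F} M^2} \int_X \exp\!\Bigl(\alpha_0 \, \varphi_0^2 / \|\partial \varphi_0\|_{L^2}^2 \Bigr)\, \omega \leq e^{\alpha_0} \, C_0 =: A_{\mathcal F},
\]
which is the claimed uniform bound.

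The only real content beyond bookkeeping is the Moser--Trudinger inequality itself, which the paper already flags as the classical input (citation to \cite{moser:trudinger}). Thus the main obstacle is minimal: one must simply confirm that the version of Moser--Trudinger valid on a compact Riemann surface (with the normalization given by the $W^{1,2}$ norm used here) applies to $\varphi_0$, and that the passage from $\varphi_0$ to $\varphi$ via $\varphi^2 \leq 2\varphi_0^2 + 2m^2$ respects the required uniformity in $\varphi \in \mathcal F$. No further structural input about the correspondence $f$ or the measures $\mu^\pm$ is needed at this step; the proposition is a purely analytic statement about $W^{1,2}(X)$.
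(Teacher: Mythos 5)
Your proof is correct and takes exactly the route the paper intends: the paper states this proposition without proof, simply asserting that it "follows from the classical Moser--Trudinger theorem," and your argument (subtract the mean, normalize by $\|\partial \varphi_0\|_{L^2}$, apply Moser--Trudinger to the zero-mean part, and recombine via $\varphi^2 \leq 2\varphi_0^2 + 2m^2$) is precisely the standard reduction that makes that citation rigorous, with the uniformity in $\varphi \in \mathcal F$ correctly tracked.
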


We now make some elementary observations. Given $x \in X$, a branch of order $n$ starting at $x$ is a sequence of the form 
$$\mathbf x = \left(x_0, x_1,x_2,\ldots,x_n \right) \text{ with } x_0=x \text{ and } x_{j} \in f(x_{j-1}) \text{ for } j=1,\ldots,n-1.$$ 
Here, the points, and hence the branches, are repeated according to their multiplicities. Every point $x$ admits $d^n$ branches of order $n$ counted with multiplicity.

Recall that $B_2$ denotes the set of critical values of $f$ introduced in Section \ref{sec:preliminaries}. This is the largest finite subset of $X$ such that $\pi_2$ restricted to each germ of $\Gamma$ outside $\pi_2^{-1}(B_2)$ is unramified.
We have the following elementary fact, which says that the number of times a branch of $f$ visits $B_2$ is uniformly bounded.

\begin{lemma} \label{lemma:finite-visits}
Let $f$ be as in Theorem \ref{thm:equid-no-periodic-crit} and  let $\mathbf x = (x_0,x_{1},\ldots,x_{n})$ be any branch of order $n$ of $f$. Then  we have $\#\{j : x_{j} \in B_2 \} \leq \# B_2$.
\end{lemma}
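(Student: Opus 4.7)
The plan is a simple pigeonhole / periodicity argument. Let $j_1 < j_2 < \cdots < j_k$ be the indices $j \in \{0,1,\ldots,n\}$ for which $x_j \in B_2$, so that the question is whether $k \le \#B_2$. First I would observe that it suffices to show that the points $x_{j_1},\ldots,x_{j_k}$ are all \emph{distinct} in $B_2$, because then $k \le \#B_2$ trivially.

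Next, I would argue by contradiction. Suppose $x_{j_a} = x_{j_b}$ for some $a<b$. Along the branch $\mathbf x$, we have $x_{j+1} \in f(x_j)$ for every $j$, and by composing these inclusions (using $d_j(f^m) = d_j(f)^m$ and the definition of iterates in Section \ref{sec:preliminaries}), it follows that $x_{j_b} \in f^{\,j_b - j_a}(x_{j_a})$. Since $x_{j_a} = x_{j_b}$, the point $x_{j_a}$ lies in its own image under $f^{\,j_b-j_a}$, hence is a periodic point of $f$ of period dividing $j_b - j_a \ge 1$. But $x_{j_a} \in B_2$ is a critical value of $f$, contradicting the assumption in Theorem \ref{thm:equid-no-periodic-crit} that no critical value of $f$ is periodic.

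Therefore the $x_{j_i}$ are pairwise distinct elements of $B_2$, giving $\#\{j : x_j \in B_2\} = k \le \#B_2$, as required. There is no real obstacle here; the only thing worth being careful about is that multiplicities of branches do not create a fake equality $x_{j_a} = x_{j_b}$ that is not actually a genuine return of the orbit, but since the sequence of points $(x_0,\ldots,x_n)$ is itself the record of the branch, equality of two entries is precisely an honest return of $x_{j_a}$ to itself under $f^{\,j_b - j_a}$, and the hypothesis applies directly.
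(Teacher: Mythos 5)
Your proof is correct and follows essentially the same route as the paper: both reduce the bound to the observation that two indices with $x_{j_a}=x_{j_b}\in B_2$ would produce a periodic critical value, contradicting the hypothesis. The paper phrases it as a pigeonhole argument starting from the negation of the conclusion, while you directly show the visited points in $B_2$ are pairwise distinct, but the content is identical.
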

\begin{proof}
If the result is not true then there are $j_1$ and $j_2$ , with $j_1 < j_2$, such that $x_{j_1}, x_{j_2}$ are equal and belong to $B_2$. By setting $z:= x_{j_1}=x_{j_2}$ and $m:= j_2-j_1$, we have that $(z,x_{j_{1}+1},\ldots,x_{j_{2}-1},z)$ is a branch of order $m$ of $f$. This contradicts the hypothesis that  no critical value of $f$ is periodic.
\end{proof}

Let $\Gamma^{(n)}$ be the graph of $f^n$ in $X\times X$. An irreducible germ $V$ of $\Gamma^{(n)}$ at a point $(x,y)$ can be seen as the graph of a local multi-valued holomorphic map sending $x$ to $y$. We will call this map {\it a germ of $f^n$ at the point $x$}. Its {\it degree} is, by definition,  the local degree of 
$\pi_2|_V$ at the point $(x,y)$. 
Denote by $\kappa_n$ the maximal degree of a germ of $f^n$.

\begin{lemma} \label{l:small-mult}
The integer $\delta:=\sup_{n\geq 0} \kappa_n$ is finite.
\end{lemma}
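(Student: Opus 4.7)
The plan is to bound $\kappa_n$ uniformly in $n$, exploiting the hypothesis of Theorem \ref{thm:equid-no-periodic-crit} that no critical value of $f$ is periodic, together with Lemma \ref{lemma:finite-visits}.

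The first move is to lift the question to the iterated fibered product. Recall from the composition construction that $\Gamma^{(n)}$ is the push-forward under the projection $p:(y_0,\ldots,y_n)\mapsto (y_0,y_n)$ of a $1$-cycle $\widehat{\Gamma}^{(n)}\subset X^{n+1}$ cut out by the conditions $(y_{j-1},y_j)\in \Gamma$ for $j=1,\ldots,n$. Any irreducible germ $V$ of $\Gamma^{(n)}$ at $(x_0,x_n)$ is thus the image under $p$ of an irreducible germ $V'$ of $\widehat{\Gamma}^{(n)}$ located at some chain $\mathbf{x}=(x_0,x_1,\ldots,x_n)$, and locally $V'$ is an irreducible component of the fibered product $V_1\times_X\cdots\times_X V_n$, where each $V_j$ is an irreducible germ of $\Gamma$ at $(x_{j-1},x_j)$. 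Write $\beta_j$ for the local degree of $\pi_2|_{V_j}$ at $(x_{j-1},x_j)$.

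The key step is to prove
\[
\deg(\pi_2|_V)\ \le\ \prod_{j=1}^n \beta_j.
\]
For this, let $\varpi:X^{n+1}\to X$ project to the last coordinate, so that $\varpi=\pi_2\circ p$. An elementary counting argument, starting from a generic value $y_n$ close to $x_n$ and unwinding the chain backwards (each $V_j$ providing $\beta_j$ values of $y_{j-1}$ for a generic $y_j$), shows that the local $\varpi$-degree of $V_1\times_X\cdots\times_X V_n$ at $\mathbf{x}$ equals $\prod_j \beta_j$. Since $V'$ is one of its irreducible components, $\deg(\varpi|_{V'})\le \prod_j\beta_j$, and the relation $\deg(\varpi|_{V'})=\deg(p|_{V'})\cdot \deg(\pi_2|_V)\ge \deg(\pi_2|_V)$ yields the claim. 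This is the step I expect to be the main obstacle, since one must be careful with the decomposition of the fibered product into several irreducible components and with the multiplicity of $p$ on $V'$.

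To finish, observe that $\beta_j>1$ forces $V_j$ to be ramified for $\pi_2$ at $(x_{j-1},x_j)$, so $(x_{j-1},x_j)\in R_2$ and consequently $x_j\in B_2$. Applying Lemma \ref{lemma:finite-visits} to the branch $(x_0,x_1,\ldots,x_n)$ yields $\#\{j:\beta_j>1\}\le \#B_2$. Denoting by $\delta_0$ the (finite) maximum of the local $\pi_2$-degree taken over all irreducible germs of $\Gamma$ at points of $R_2$, one has $\prod_j\beta_j\le \delta_0^{\#B_2}$, and therefore $\kappa_n\le \delta_0^{\#B_2}$ independently of $n$, which proves $\delta<\infty$.
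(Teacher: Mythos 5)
Your proof is correct and follows essentially the same strategy as the paper: factor a germ of $f^n$ along a chain $(x_0,\ldots,x_n)$ into germs of $f$, use multiplicativity of local $\pi_2$-degrees, and invoke Lemma~\ref{lemma:finite-visits} to limit the number of ramified factors to $\#B_2$. The only differences are cosmetic: you formalize the degree multiplicativity via the fibered product $\widehat\Gamma^{(n)}$ and the factorization $\varpi=\pi_2\circ p$ (the paper phrases this more informally as ``$g$ is a component of $g_n\circ\cdots\circ g_1$''), and you obtain the marginally tighter bound $\delta_0^{\#B_2}$ in place of the paper's $d^{\#B_2}$, since $\delta_0\le d$.
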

\proof
Consider a germ of $f^n$ as above that we denote by $g$. Assume that its degree is maximal, i.e. equal to $\kappa_n$, at $(x,y)$. Observe that there is a branch  $(x_0,\ldots, x_n)$,  with $x_0=x$, $x_n=y$, and local germs $g_j$ of $f$ sending $x_{j-1}$ to $x_j$ such that 
$g$ is a component of $g_n\circ \cdots \circ g_1$. More precisely, although the graphs of $g_j$ are irreducible, the graph of their composition may be reducible and contains the graph of $g$ as a component. 

By Lemma \ref{lemma:finite-visits}, every $g_j$, except for at most $\#B_2$ of them, are unramified, i.e.\ , of topological degree $1$.  For those which are ramified, the degree is at most equal to the topological degree $d$ of $f$. It follows that the degree of $g_n\circ \cdots \circ g_1$ is at most equal to $d^{\# B_2}$. Thus, the number $\delta$ is bounded above by $d^{\# B_2}$.
\endproof

Let $\lambda_0$ be the norm of $\frac{1}{d}f_*$ acting on $L^2_{(1,0)}$. From Theorem \ref{normless1} applied to $f^{-1}$, we see that $\lambda_0 < 1$. Choose a number $\lambda$ such that $\lambda_0 < \lambda < 1$. Let $\delta$ be as in Lemma \ref{l:small-mult} and fix an integer $N$ large enough so that $\delta < \lambda^{-N}$. By replacing $f$ and $\lambda$ by $f^N$ and $\lambda^N$,  
we may assume that $\delta < 1/\lambda$.

We recall the following version of Lojasiewicz's inequality. Since here  $X$ is a Riemann surface, the estimate can be easily proved using the  fact that any local holomorphic map from $X$ to $X$ is given, in suitable coordinates centered at 0, by $z \mapsto z^m$ for some integer $m \geq 1$. 

\begin{lemma} \label{l:lojasiewicz}
Let $f$ be a holomorphic correspondence on $X$ such that both $f$ and $f^{-1}$ have degree $d$. Assume that the local degree of every germ of $f$ is smaller than or equal to $\delta$. Then there is a constant $M \geq 1$ such that for any $x,y \in X$ we can write $f^{-1}(x)= \{x_1,\ldots,x_d\}$ and $f^{-1}(y)= \{y_1,\ldots,y_d\}$ with $\dist (x_j,y_j) \leq M \dist(x,y)^{1 \slash \delta}$ for every $j=1,\ldots,d$.
\end{lemma}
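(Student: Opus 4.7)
The plan is to work locally using the normal form for $\pi_2|_V$ provided by the hint and then globalize by compactness. For every $a \in X$, fix a holomorphic coordinate $\zeta_a$ on $X$ near $a$ (vanishing at $a$) and choose $r_a > 0$ so small that $\pi_2^{-1}(D(a, r_a)) \cap \Gamma$ splits as a finite union of irreducible pieces $V$, each based at some $(b,a) \in \Gamma$ with $b \in f^{-1}(a)$. By the hint applied to $\pi_2|_V$, together with a free reparametrization of the normalization of $V$, we may parametrize $V$ holomorphically by $t$ in a small disk around $0$ in such a way that
\[
\zeta_a \circ \pi_2|_V (t) = t^{m_V}, \qquad m_V = \deg(\pi_2|_V) \le \delta,
\]
while $\pi_1|_V$ becomes some holomorphic map $\alpha_V$ into a coordinate chart near $b$. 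Extracting a finite subcover $\{D(a_i, r_{a_i}/2)\}_{i=1}^N$ of $X$, let $r_0 > 0$ be its Lebesgue number, shrunk if necessary so that $r_0 \le 1$. All relevant chart comparisons and maps $\alpha_V$ are then uniformly Lipschitz on the regions in play, with some common constant $L > 0$.

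Now fix $x, y \in X$ with $\dist(x, y) < r_0$, and pick $i$ with $x, y \in D(a_i, r_{a_i})$. On each germ $V$ over $D(a_i, r_{a_i})$, the $m_V$ preimages of $x$ (resp. $y$) under $\pi_2|_V$ correspond to the $m_V$-th roots of $\zeta_{a_i}(x)$ (resp. $\zeta_{a_i}(y)$) in the $t$-variable. Pick any $m_V$-th root $s$ of $\zeta_{a_i}(x)$ and let $t$ be the $m_V$-th root of $\zeta_{a_i}(y)$ closest to $s$; the polynomial identity
\[
\prod_k (s - t_k) = s^{m_V} - \zeta_{a_i}(y) = \zeta_{a_i}(x) - \zeta_{a_i}(y)
\]
gives $|s - t|^{m_V} \le |\zeta_{a_i}(x) - \zeta_{a_i}(y)| \le L\,\dist(x, y)$. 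Pairing the remaining roots by multiplying both sides by the same $m_V$-th root of unity preserves the distance $|s - t|$, producing $m_V$ matched pairs. Pushing through the Lipschitz map $\alpha_V$ and converting back to the intrinsic distance on $X$ yields a pairing of the corresponding points in $f^{-1}(x)$ and $f^{-1}(y)$ within $C\,\dist(x, y)^{1/m_V} \le C\,\dist(x, y)^{1/\delta}$, using $m_V \le \delta$ and $\dist(x, y) \le 1$.

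Concatenating these pairings over the finitely many germs $V$ over $D(a_i, r_{a_i})$ produces a bijection between $f^{-1}(x)$ and $f^{-1}(y)$ (each of cardinality $d$, counted with multiplicity) with all paired distances bounded by $M_0\,\dist(x, y)^{1/\delta}$, for $M_0 > 0$ uniform in $i$. When $\dist(x,y) \ge r_0$, any bijection works via the crude bound $\dist(x_j, y_j) \le \mathrm{diam}(X) \le \mathrm{diam}(X) \cdot r_0^{-1/\delta}\,\dist(x,y)^{1/\delta}$. Choosing $M$ to exceed both constants finishes the proof. The only delicate point is the coherent simultaneous pairing of the $m_V$-th roots, handled by the rotation-by-roots-of-unity trick once an initial closest pair is chosen; all other bookkeeping (finite subcover, uniform Lipschitz comparisons, chart compatibilities) is routine via compactness.
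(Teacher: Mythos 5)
The paper does not actually give a proof of this lemma; it only offers a one-line hint (the local normal form $z \mapsto z^m$ for holomorphic maps of a Riemann surface), and your argument is a correct and complete implementation of that hint. All the ingredients check out: the parametrization of each irreducible germ $V$ so that $\pi_2|_V$ becomes $t \mapsto t^{m_V}$ while $\pi_1|_V$ becomes a uniformly Lipschitz map $\alpha_V$, the key estimate $|s-t|^{m_V} \le \prod_k |s-t_k| = |\zeta(x)-\zeta(y)|$ for the closest root $t$, the roots-of-unity rotation trick to extend this to a full bijection of the fibres with all pair distances equal to $|s-t|$, and the compactness and Lebesgue-number bookkeeping to produce a uniform constant $M$ (together with the trivial bound when $\dist(x,y) \ge r_0$).
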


Consider now the operator $$\Lambda := \frac{1}{d} f_*$$ acting on $W^{1,2}$. Let $\varphi$ be a $\mathcal{C}^1$ function on $X$ with $\|\varphi\|_{\mathcal C^1} \leq 1$. We have $\varphi \in W^{1,2}$ and $\varphi$ is Lipschitz continuous with Lipschitz constant equal to $1$. Define $\varphi_n := \Lambda^n \varphi$ for $n\geq 0$.

In what follows, we say that a real valued function $u$ is {\it $(M,\gamma)$-H\"older continuous} if $|u(x) - u(y)|  \leq M \dist(x,y)^\gamma$ for every $x,y \in X$.

\begin{proposition} \label{prop:holder-loja}
Let $\varphi$ and $\varphi_n$ be as above. Then there is a constant $M \geq 1$ such that $\varphi_n$ is $(M^n,\delta^{-n})$-H\"older continuous for every $n\geq 0$.
\end{proposition}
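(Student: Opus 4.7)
The plan is to proceed by induction on $n$, using the description of $\Lambda$ as a normalized sum over preimages combined with Lojasiewicz's inequality (Lemma \ref{l:lojasiewicz}) to control how H\"older regularity degrades under one step of $\Lambda$.

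For the base case $n=0$, since $\|\varphi\|_{\mathcal C^1} \leq 1$, the function $\varphi_0 = \varphi$ is $1$-Lipschitz, hence $(1,1)$-H\"older continuous. We choose $M$ to be the maximum of $1$ and the constant provided by Lemma \ref{l:lojasiewicz}, which gives the base case (and ensures $M \geq 1$, which will be needed in the inductive step).

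For the inductive step, assume that $\varphi_{n-1}$ is $(M^{n-1}, \delta^{-(n-1)})$-H\"older continuous. Recall that for a continuous function $\psi$ we have $(\Lambda \psi)(y) = \frac{1}{d}\sum_{x \in f^{-1}(y)} \psi(x)$, the sum being taken over preimages counted with multiplicity. Given two points $y,y' \in X$, apply Lemma \ref{l:lojasiewicz} to write $f^{-1}(y) = \{x_1,\ldots,x_d\}$ and $f^{-1}(y') = \{x_1',\ldots,x_d'\}$ in such a way that $\dist(x_j, x_j') \leq M \dist(y,y')^{1/\delta}$ for all $j$. Then
\begin{align*}
|\varphi_n(y) - \varphi_n(y')|
&\leq \frac{1}{d}\sum_{j=1}^d |\varphi_{n-1}(x_j) - \varphi_{n-1}(x_j')| \\
&\leq \frac{1}{d}\sum_{j=1}^d M^{n-1} \dist(x_j,x_j')^{\delta^{-(n-1)}} \\
&\leq M^{n-1} \cdot M^{\delta^{-(n-1)}} \cdot \dist(y,y')^{\delta^{-n}}.
\end{align*}
Since $\delta \geq 1$, we have $\delta^{-(n-1)} \leq 1$, and since $M \geq 1$, we get $M^{\delta^{-(n-1)}} \leq M$. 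Therefore the prefactor is bounded by $M^n$, giving the desired $(M^n, \delta^{-n})$-H\"older estimate for $\varphi_n$.

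The argument is essentially direct once one has Lemma \ref{l:lojasiewicz} at hand; the only subtle points are that the Lojasiewicz constant $M$ and the bound $\delta$ on germ degrees are both independent of $n$ (provided by Lemmas \ref{l:small-mult} and \ref{l:lojasiewicz}), and that the matching of preimages must be done with multiplicity so that every preimage of $y$ is paired off with exactly one preimage of $y'$. The main conceptual obstacle, already resolved earlier in the section, is establishing the uniform bound $\delta = \sup_n \kappa_n < \infty$ on the degrees of germs of the iterates: without that, the H\"older exponent $\delta^{-n}$ would depend on the choice of point or branch and the induction could not be closed in a uniform way.
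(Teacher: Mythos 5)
Your proof is correct and follows essentially the same route as the paper: induction on $n$, pairing preimages via Lemma \ref{l:lojasiewicz} and averaging over the $d$ branches. You are in fact slightly more explicit than the paper about the step bounding the prefactor $M^{n-1}\cdot M^{\delta^{-(n-1)}}$ by $M^n$ using $M\geq 1$ and $\delta^{-(n-1)}\leq 1$.
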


\begin{proof}
Since we are assuming that every germ of $f$ has degree $\delta$ or less, by Lemma \ref{l:lojasiewicz},  there is a constant $M \geq 1$ such that for any $x,y \in X$ we can write $f^{-1}(x) = \{x_1, \ldots, x_d\}$ and $f^{-1}(y) = \{y_1, \ldots, y_d\}$ with $\dist (x_j,y_j) \leq M \, \dist(x,y)^{1 \slash \delta}$ for every $j$.

We will prove the result by induction. For $n=0$ the result is obvious, so we assume it is true for $n$. Then, we have
\begin{align*}
\left| \varphi_{n+1}(x) - \varphi_{n+1}(y) \right| &= \frac{1}{d} \left| f_*\varphi_{n}(x) - f_*\varphi_{n}(y) \right| \leq \frac{1}{d} \sum_{i=1}^d \left| \varphi_{n}(x_i) - \varphi_{n}(y_i) \right| \\
&\leq M^n \, \dist(x_i,y_i)^{1 \slash \delta^n} \leq M^{n+1}\, \dist(x,y)^{1 \slash \delta^{n+1}}, 
\end{align*}
which gives what we wanted.
\end{proof}

Let 
$$\mathcal W^+:=\big\{\varphi \in W^{1,2} : \langle \mu^+,\varphi \rangle = 0,  \text{ and } \|\del \varphi \|_{L^2} \leq 1 \big\}.$$
Here, the pairing $\langle \mu^+,\varphi \rangle$ is defined by Proposition \ref{prop:mu-continuous}.

\begin{lemma} \label{lemma:sobolev-norm-bounded-W+}
The set $\mathcal W^+$ is bounded in $W^{1,2}$ and invariant by $\Lambda$.
\end{lemma}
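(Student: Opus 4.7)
The plan is to establish the two properties separately, with boundedness being the slightly more delicate one and invariance being essentially a direct consequence of what has already been proved.

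For the boundedness claim, I would use the orthogonal-type decomposition that was implicit in the proof of Proposition \ref{equinorm}. Given $\varphi \in \mathcal W^+$, set $m := \int_X \varphi \, \omega$ and $\varphi_0 := \varphi - m$, so that $\int_X \varphi_0 \, \omega = 0$. By the Poincar\'e--Sobolev inequality applied to $\varphi_0$, one has $\|\varphi_0\|_{L^2} \lesssim \|\del \varphi_0\|_{L^2} = \|\del \varphi\|_{L^2} \leq 1$, and therefore $\|\varphi_0\|_{W^{1,2}}$ is bounded by a universal constant. Now the key input is that, by Proposition \ref{expcon} (or equivalently the estimate $|c_h^+| \leq A' \|h\|_{W^{1,2}}$ used in Proposition \ref{prop:mu-continuous}), the pairing with $\mu^+$ is a bounded linear functional on $W^{1,2}$. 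Since $\langle \mu^+, \varphi \rangle = 0$ and $\langle \mu^+, 1 \rangle = 1$, we obtain
\[
m = -\langle \mu^+, \varphi_0 \rangle, \qquad \text{hence} \qquad |m| \leq A' \|\varphi_0\|_{W^{1,2}},
\]
which is uniformly bounded. Combining with $\|\del \varphi\|_{L^2}\leq 1$ and the definition of $\|\cdot\|_{W^{1,2}}$ yields the desired uniform bound on $\|\varphi\|_{W^{1,2}}$ for $\varphi \in \mathcal W^+$.

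For invariance under $\Lambda = \frac{1}{d} f_*$, there are two conditions to verify. The derivative condition is immediate from Proposition \ref{prop:CS-ineq}: since $\del$ commutes with $f_*$ (which one first checks on smooth forms and then extends by density and continuity of $\Lambda$ from Proposition \ref{boundop}), we get $\del(\Lambda \varphi) = \frac{1}{d} f_* (\del \varphi)$, and Proposition \ref{prop:CS-ineq} gives $\|\frac{1}{d} f_*(\del \varphi)\|_{L^2} \leq \|\del \varphi\|_{L^2} \leq 1$. The vanishing of $\langle \mu^+, \Lambda \varphi \rangle$ follows by duality from $f^* \mu^+ = d \mu^+$: for smooth $\varphi$ this is just
\[
\langle \mu^+, \Lambda \varphi \rangle = \tfrac{1}{d} \langle f^* \mu^+, \varphi \rangle = \langle \mu^+, \varphi \rangle = 0,
\]
and the identity extends to $\varphi \in W^{1,2}$ by approximating with smooth functions in the $W^{1,2}$-norm, since both $\Lambda : W^{1,2} \to W^{1,2}$ and $\langle \mu^+, \cdot \rangle : W^{1,2} \to \R$ are continuous.

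The main (and only) obstacle, if any, is to make sure that all the pairings above are interpreted consistently with Proposition \ref{prop:mu-continuous}; once one notices that $\mu^+$ extends to a bounded linear functional on $W^{1,2}$, both parts of the lemma become short computations, and no new technique is needed.
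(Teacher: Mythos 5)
Your proposal is correct and follows essentially the same route as the paper: the same decomposition $\varphi = \varphi_0 + m$ combined with the Poincar\'e--Sobolev inequality and the continuity of $\langle \mu^+,\cdot\rangle$ on $W^{1,2}$ (Proposition \ref{prop:mu-continuous}) for boundedness, and the same duality argument from $f^*\mu^+ = d\,\mu^+$ together with Proposition \ref{prop:CS-ineq}, extended by density of smooth functions, for invariance. No gaps.
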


\begin{proof}
We first show that $\mathcal W^+$ is bounded in $W^{1,2}$. 
Let $\varphi \in \mathcal W^+$. Since $\|\del \varphi \|_{L^2} \leq 1$  we only need to prove that $\|\varphi\|_{L^2}$ is bounded by a constant independent of $\varphi$. 
By Poincar\'e-Sobolev inequality, we have
$$\|\varphi\|_{L^2} \leq |m(\varphi)| + c \|\del \varphi \|_{L^2},$$ 
where $m(\varphi) := \lp \omega, \varphi \rp$ and $c>0$ is a constant.

Consider the function $\widetilde \varphi := \varphi - m(\varphi)$. We have $m(\widetilde \varphi) = 0$ and $\|\del \widetilde \varphi \|_{L^2} = \|\del \varphi \|_{L^2} \leq 1$. So by the above Poincar\'e-Sobolev inequality applied to $\widetilde\varphi$, we see that $\|\widetilde \varphi\|_{W^{1,2}}$ is  bounded independently of $\varphi$. 
Moreover, we have 
$$|m(\varphi)| = |  \langle \mu^+, \varphi -  \widetilde \varphi \rangle| = |\langle  \mu^+,  \widetilde \varphi \rangle|$$
and by Proposition \ref{prop:mu-continuous}, the last expression is bounded by a constant independent of $\varphi$. Thus, $\|\varphi\|_{L^2}$ is bounded independently of $\varphi$. 

It remains to prove that $W^{1,2}$ is invariant by $\Lambda$ or equivalently to show that $\Lambda\varphi$ belongs to $\mathcal W^+$. 
When $\varphi$ is smooth, $\Lambda\varphi$ is continuous. By Remark \ref{r:integral}, the pairing $\lp\mu^+,\Lambda\varphi\rp$ is just the usual integral and the desired property follows from the $f^*$-invariance of $\mu^+$ and Proposition \ref{prop:CS-ineq}. 

It is not difficult to see that smooth functions are dense in $\mathcal W^+$. Therefore, the case of non-smooth $\varphi$ follows from the continuity of the operator $\Lambda$ and the pairing $\lp\mu^+,\cdot\rp$ on $W^{1,2}$. This ends the proof of the lemma.
\end{proof}

\begin{proposition} \label{prop:holder-exponential}
There is a constant $A > 0$ (independent of $M$ and $\gamma$) such that if $\varphi \in \mathcal W^+$ is $(M,\gamma)$-H\"older continuous for some constants $M \geq 1$ and $0 < \gamma \leq 1$, then $$\|\varphi\|_\infty \leq A \gamma^{-1}(1+\log M).$$
\end{proposition}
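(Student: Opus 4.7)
The plan is to combine the Moser--Trudinger type exponential integrability from Proposition \ref{prop:exponential-estimate} with a localization argument driven by the Hölder modulus of $\varphi$. Since $\mathcal W^+$ is bounded in $W^{1,2}$ by Lemma \ref{lemma:sobolev-norm-bounded-W+}, and since $\mathcal W^+$ is clearly invariant under $\psi \mapsto -\psi$ (both defining conditions are preserved by a sign change), Proposition \ref{prop:exponential-estimate} will furnish constants $\theta,A_0>0$ depending only on $\mathcal W^+$ such that $\int_X e^{\theta\psi^2}\omega \leq A_0$ for $\psi = \pm \varphi$. The rough idea is: if $|\varphi|$ attains a large value $K$ at some point $x_0$, the Hölder hypothesis forces $|\varphi|\geq K/2$ on an explicit disk around $x_0$, and inserting this pointwise lower bound into the exponential integral will prevent $K$ from being too large.

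Concretely, I set $K:=\|\varphi\|_\infty$, pick $x_0$ with $|\varphi(x_0)|=K$, and flip the sign of $\varphi$ if necessary to arrange $\varphi(x_0)=K$. The $(M,\gamma)$-Hölder estimate gives $\varphi \geq K/2$ on $B(x_0,r)$ with $r := (K/(2M))^{1/\gamma}$. I then fix, once and for all, a geometric constant $r_X>0$ such that every disk $B(y,s)\subset X$ with $s\leq r_X$ has $\omega$-area at least $c_0 s^2$ for some $c_0>0$, and put $\rho := \min(r,r_X)$. In the case $\rho=r_X$ one necessarily has $K \geq 2Mr_X^\gamma$, so $\varphi \geq K - Mr_X^\gamma \geq K/2$ still holds on $B(x_0,\rho)$. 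In either case the Moser--Trudinger bound will yield
$$A_0 \;\geq\; \int_{B(x_0,\rho)} e^{\theta\varphi^2}\omega \;\geq\; c_0\,\rho^2\, e^{\theta K^2/4},$$
hence $\theta K^2/4 \leq \log(A_0/c_0)+2\log(1/\rho)$.

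A short case analysis then closes the argument. If $\rho=r_X$, the right-hand side is an absolute constant, so $K$ is absolutely bounded, which already implies the claim because $\gamma^{-1}(1+\log M)\geq 1$ under the standing hypotheses $\gamma\leq 1$, $M\geq 1$. If $\rho=r$, then $2\log(1/\rho)=(2/\gamma)\log(2M/K)$; assuming $K \geq 1$ (otherwise the bound is trivial), this is $\leq (2/\gamma)\log(2M)\lesssim \gamma^{-1}(1+\log M)$, so $K^2 \lesssim \gamma^{-1}(1+\log M)$. Extracting a square root and again using $\gamma^{-1}(1+\log M)\geq 1$, I conclude $K \lesssim \gamma^{-1}(1+\log M)$, which is the desired inequality.

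The main technical point I anticipate is the bookkeeping of the case split between $\rho=r$ and $\rho=r_X$, which is needed to keep the disk $B(x_0,\rho)$ inside a uniform chart with a controlled lower bound on its $\omega$-area; without it one could not directly replace $\omega(B(x_0,r))$ by $c_0 r^2$ when $r$ is large compared with the diameter of $X$. Apart from that, the proof reduces to a direct application of the exponential estimate combined with the Hölder hypothesis, and the logarithmic factor in the conclusion comes out naturally from balancing the Gaussian $e^{\theta K^2/4}$ against the algebraic area $\rho^2$ in the integral bound.
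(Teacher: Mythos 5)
Your proof is correct and rests on the same two pillars as the paper's: the Moser--Trudinger exponential integrability of Proposition \ref{prop:exponential-estimate} applied to the bounded, sign-symmetric family $\mathcal W^+$, and the propagation of a large value of $\varphi$ to a disk of controlled radius via the H\"older modulus. The execution differs in a few respects worth recording. You argue directly, setting $K=\|\varphi\|_\infty$ and solving for it, whereas the paper argues by contradiction, supposing $|\varphi(a)|>A\gamma^{-1}(1+\log M)$ for arbitrary $A$ and letting $A\to\infty$. Your disk radius $r=(K/2M)^{1/\gamma}$ depends on the unknown $K$, which is what forces the extra case split $\rho=\min(r,r_X)$; the paper instead takes $r=\tfrac12 M^{-1/\gamma}$, which is automatically $\leq 1/2$ because $M\geq 1$, so no such split is needed. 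Finally, you keep the Gaussian $e^{\theta\varphi^2}\geq e^{\theta K^2/4}$ intact, while the paper degrades it to $e^{\theta|\varphi|}$ (valid since $|\varphi|\geq 1$ on the disk); as a result your computation actually yields the sharper estimate $\|\varphi\|_\infty\lesssim\big(\gamma^{-1}(1+\log M)\big)^{1/2}$, which you then deliberately weaken to the stated bound. Both arguments are sound; yours buys a quantitatively stronger conclusion at the modest cost of the $\rho=\min(r,r_X)$ bookkeeping.
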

\begin{proof}
Suppose the conclusion is false. Then, for every constant $A>1$, we can find a point $a$ such that $|\varphi(a)| >  A \gamma^{-1}(1+\log M)$. 
Fix a constant $A$ large enough. We claim that there is a constant $0<r<1/2$ such that $|\varphi| \geq \frac{1}{2} A\log(1 \slash r)$ on the disc $\DD(a,r)$ of radius $r$ centered at $a$. 

In order to prove the claim, let $r := \frac{1}{2} M^{-1 \slash \gamma}$ and consider an arbitrary point $b$ such that $\dist(a,b) \leq r$. Then, we have\begin{align*}
|\varphi(b)| &\geq - |\varphi(b) - \varphi(a)| + |\varphi(a)| \geq - M \dist(b,a)^\gamma +  A \gamma^{-1}(1+\log M) \\
&\geq -1 +  A \gamma^{-1}(1+\log M) \geq \frac{1}{2} A\log(1 \slash r),
\end{align*}
which proves the claim.

Now, let $\theta:=\theta_{\mathcal F}$ be the constant in Proposition \ref{prop:exponential-estimate} for ${\mathcal F}:= {\mathcal W}^+$. 
Then, we have 
$$\int_{\DD(a,r)} e^{\theta |\varphi|^2}\omega \geq \int_{\DD(a,r)} e^{\theta |\varphi |} \omega \gtrsim r^{2 - \theta A\slash 2 }.$$ 
We get a contradiction with Proposition \ref{prop:exponential-estimate} because $r<1/2$ and $A$ can be taken arbitrarily large. This ends the proof of the proposition.
\end{proof}

\begin{proof}[End of the proof of Theorem \ref{thm:equid-no-periodic-crit}] 
Recall that we only need to prove the estimate (\ref{eq:equid-no-periodic-crit}) for $\beta=1$. Let $\lambda,\lambda_0$ and $\delta$ be as before. As seen above we may assume that $\delta < 1 \slash \lambda$.

Let $\varphi$ be a function of class $\mathcal C^1$. If $\varphi$ is constant then it is clear that (\ref{eq:equid-no-periodic-crit}) holds. Therefore, we may subtract a constant from $\varphi$ and assume that $\langle \mu^+,\varphi \rangle = 0$. We may also multiply $\varphi$ by a non-zero constant and assume that  $\|\varphi\|_{\mathcal C^1} \leq 1$. Letting $\varphi_n := \Lambda^n(\varphi)$ as before, we have  
$$\langle d^{-n}(f^n)^*\delta_a, \varphi \rangle = \langle \delta_a, \varphi_n \rangle = \varphi_n(a).$$
So we need to prove that $|\varphi_n(a)| \leq A \lambda_+^{-n}$ for some constants $A>0$ and $0<\lambda_+<1$.

From Proposition \ref{prop:holder-loja} there is a constant $M \geq 1$ such that  $\varphi_n$ is $(M^n,\delta^{-n})$-H\"older continuous for every $n \geq 0$. Define $\widehat \varphi_n := \lambda^{-n} \varphi_n$. This is a $(\lambda^{-n} M^n,\delta^{-n})$-H\"older continuous function. We also have 
$\langle \mu^+, \widehat \varphi_n \rangle = 0$  and
$$\|\del \widehat \varphi_n\|_{L^2} = \lambda^{-n} \|\del \Lambda^n(\varphi)\|_{L^2} =  \lambda^{-n} \|d^{-n}(f^n)^*(\del \varphi)\|_{L^2} \leq \lambda^{-n} \lambda_0^n \|\del \varphi\|_{L^2}.$$
Since $\lambda_0 < \lambda$, the function $\widehat \varphi_n$ belongs to $\mathcal W^+$. By Proposition \ref{prop:holder-exponential} we have 
$$\|\widehat \varphi_n\|_{\infty} \leq A\delta^n\big(1 + \log (\lambda^{-n} M^n)\big) \leq An\delta^n \big(1 + \log (\lambda^{-1} M)\big) $$
or equivalently
$$\|\varphi_n\|_{\infty} \leq A\delta^n \lambda^n \big(1 + \log (\lambda^{-n} M^n)\big) \leq An\delta^n \lambda^n\big(1 + \log (\lambda^{-1} M)\big).$$ 
 
Finally, since $\delta < 1/\lambda$, by choosing $\lambda_+$ such that $\delta\lambda<\lambda_+<1$ and $A^+$ a constant  large enough, we obtain that $\| \varphi_n\|_{\infty}\leq A^+\lambda_+^n$. This proves the estimate (\ref{eq:equid-no-periodic-crit}) for $\beta=1$. The theorem follows.
\end{proof}


\bibliographystyle{alpha}

\bibliography{refs}

\end{document}